\def\CC{\mathbb{C}}
\def\R{\mathbb{ R}}
\def\N{\mathbb{ N}}
\def\T{\mathbb{T}}
\def\D{\mathcal{ D}}
\def\O{\mathcal{ O}}
\def\L{\mathcal{ L}}
\def\F{\mathcal{F}}
\def\J{\mathcal{J}}
\def\F{\mathcal{F}}
\def\eps{\varepsilon}
\def\sig{\sigma}
\def\i{\mathrm{i}}
\def\e{\mathrm{e}}
\def\d{\mathrm{d}}
\def\tr{\mathsf{T}}
\newtheorem{theorem}{Theorem}
\newtheorem{lemma}[theorem]{Lemma}
\theoremstyle{remark}
\newtheorem{remark}[theorem]{Remark}
\newtheorem{definition}[theorem]{Definition}
\newtheorem{assumption}[theorem]{Assumption}
\begin{document}

\title[Uniformly accurate time integrators]%
{High-order uniformly accurate time integrators for semilinear wave
equations of Klein--Gordon type in the non-relativistic limit}
\date{\today}

\author[H. Mohamad]{Haidar Mohamad}
\author[M. Oliver]{Marcel Oliver}
\address[H. Mohamad and M. Oliver]{Mathematical Institute for Machine
Learning and Data Science \\
KU Eichst\"att--Ingolstadt \\
85049 Ingolstadt \\ Germany}

\address[M. Oliver]{Constructor University, 28759 Bremen, Germany}

\keywords{Semilinear wave equations, oscillatory problems,
high-frequency limit, oscillatory integrator}
\subjclass[2020]{Primary 65L11; Secondary 35Q40, 35B25}

\begin{abstract}
We introduce a family of high-order time semi-discretizations for
semilinear wave equations of Klein--Gordon type with arbitrary smooth
nonlinerities that are uniformly accurate in the non-relativistic
limit where the speed of light goes to infinity.  Our schemes do not
require pre-computations that are specific to the nonlinearity and
have no restrictions in step size.  Instead, they rely upon a general
oscillatory quadrature rule developed in a previous paper (Mohamad and
Oliver, SIAM J. Num. Anal. 59, 2021, 2310--2319).
\end{abstract}

\maketitle
   
\section{Introduction}\label{Intro}

Let $X$ be a Hilbert space with inner product
$\langle \, \cdot \, , \, \cdot \, \rangle$ and associated norm
$\lVert \, \cdot \rVert$.  We study a semilinear wave equation on
$X$,
\begin{subequations}\label{e.MainEq}
\begin{gather}
  c^{-2} \, \partial_{tt} \phi + L \phi + c^2 \, \phi
  = f(\phi, t) \,, \\
  \phi(0) = \phi_0 \,, \\
  \partial_t \phi(0) = \phi'_0 \,,
\end{gather}
\end{subequations}
where $\phi \colon [0, T] \to X$, $L$ is a closed, densely defined,
self-adjoint, non-negative operator on $X$ with domain $\D(L)$, $c$ is
a positive constant, and $f \colon \D(L) \times [0,T] \to X$ a smooth
function.  Such equations arise, for example, in acoustics,
electromagnetics, quantum mechanics, and geophysical fluid dynamics,
both in the ``relativistic'' ($c=1$) and ``nonrelativistic''
($c\gg 1$) regimes.  The motivation for studying \eqref{e.MainEq} as
written is that it covers two well-studied special cases:
\begin{enumerate}[label={\upshape(\roman*)}]
\item $X = H^r(\T^d)$, $L = \Delta$, and
$f(\phi, t) = |\phi|^2 \, \phi$, which corresponds to the standard
semilinear Klein--Gordon equation.
\item $X = \R^{2d}$, $L =0$, and
$f(\phi, t) = -2 \, \e^{- c^2 t J} \, \nabla V( \e^{c^2 t J} \phi)$,
where $J$ is the canonical symplectic matrix in $2d$ dimensions and
$V$ is a smooth potential.  Changing variables via
$q = \e^{c^2 t J} \phi$, we can write the system in the standard form
\begin{subequations}\label{e.ODE}
\begin{gather}
  \dot q = p \,, \\
  \frac{1}{2 c^2} \, \dot p =  J p - \nabla V(q) \,.
\end{gather}
\end{subequations}
This system has been used as a finite-dimensional toy model for
rotating fluid flow, where the limit $c \to\infty$ corresponds to a
rapidly rotating earth.
\end{enumerate}
Analytically, the non-relativistic limit regime is well-studied for
these two examples.  We refer the reader to
\cite{FaouS:2014:AsymptoticPS, Masmoudi_KG} for the case of the
Klein--Gordon equation and to \cite{CotterReich06,
GottwaldMO:2017:OptimalBA, GottwaldO:2014:SlowDD} for the case of
system \eqref{e.ODE}.  Numerically, equation \eqref{e.MainEq} is
extensively studied in the relativistic regime
\cite{Gauckler:2015:ErAnlSWE, StraussVazquez:1978:NumKG}.  However,
due to the high oscillatory character of the solutions when $c$ is
large, most numerical methods suffer from severe time step restriction
in the non-relativistic regime.

Several authors have considered the problem of finding
``asymptotics-preserving numerical schemes'', i.e., schemes that
perform uniformly in this singular limit. Some of these schemes
\cite{FaouS:2014:AsymptoticPS} are based on a modulated Fourier
expansion of the exact solution
\cite{CohenHairerLubich:2003:ModFourierHighOscil,
HairerLubichWanner:2006:StrPresAlgo} where the highly oscillatory
problem in \eqref{e.ODE} is reduced to a non-oscillatory limit
Schr\"{o}dinger equation for which no time step restriction is needed.
Other schemes are based on multiscale expansions of the exact solution
\cite{BaoCaiZho:2014:MultiscaleNumKG,
ChartierCroMht:2015:UnifAccNumKG}.  Chartier \emph{et al.}\
\cite{ChartierLM:2020:NewCU} recently introduced a new method which
employs an averaging transformation to soften the stiffness of the
problem, hence allowing standard schemes to retain their order of
convergence.   Baumstark \emph{et al.}\ 
\cite{BaumstarkFS:2018:UniformlyAE} construct first and second order
uniformly accurate integrators for the Klein--Gordon equation with
\emph{cubic} nonlinearity by integrating the trigonometric products
arising from a suitable mild formulation explicitly.

In this paper, we develop a family of high-order
asymptotics-preserving schemes for \eqref{e.MainEq} that do not
require pre-computations tied to the specific nonlinearity $f$ and
have no restrictions in time step size.  The construction of the new
schemes is explained in Section \ref{Sec.AccuSchm}. We outline here
their main ingredients, where the first two follow the prior work
\cite{BaumstarkFS:2018:UniformlyAE}:
\begin{enumerate}[label={\upshape(\roman*)}]
\item Reformulate \eqref{e.MainEq} as a coupled first order
system using a linear transformation.
\item Factor out the rapidly rotating phase to make it explicit. 
\item Iterate the resulting mild formulation up to the desired order for the
coupled first order system in the new variables.
\item Use the quadrature rule developed in
\cite{MohamadO:2021:NumericalIF} to handle the high oscillatory integral in the resulting mild
formulation and complete the construction of the scheme.
\end{enumerate}
The remainder of the paper is structured as follows. In
Section~\ref{Sec.Prelim}, we state some properties of the operator $L$
within the framework of its functional calculus.  In
Section~\ref{Sec.Quad}, we introduce quadrature rules for the
approximation of highly oscillatory Banach-space-valued functions in
specific settings that will fit the construction of our
schemes. Section~\ref{Sec.AccuSchm} is devoted to the detailed
construction of the schemes.  Our main result on the order of
convergence of the schemes is stated and proved in
Section~\ref{Sec.ConvAnl}.  In Section~\ref{s.numerics}, we
demonstrate that the new schemes are accurate to their expected order
and that their error behavior is indeed uniform in $c$.

\section{Preliminaries}\label{Sec.Prelim}

We define the operators $B_c = c^{-1} \, \sqrt{L + c^2}$ and
$A_c = c^2 \, B_c - c^2$.  These operators are well-defined via the
spectral theorem for densely defined normal operators (e.g.\
\cite{Schmuedgen}).  Indeed, for any densely defined normal operator
$P\colon \D(P) \subseteq X \to X$, there exists a unique spectral
measure $E_P$ on the Borel $\sigma$-algebra $\mathcal{B}(\CC)$ into
the orthogonal projections on $X$ such that
\begin{equation}
  P = \int_{\CC} \lambda \, \d E_P(\lambda)
  = \int_{\sigma(P)} \lambda \, \d E_P(\lambda) \,.
\end{equation}
This integral representation of $P$ allows us to define the
assignments $P\mapsto f(P)$ for any $E$-a.e.\ finite measurable
function $f$ by the formula
\begin{equation}
  f(P) = \int_{\sigma(P)} f(\lambda) \, \d E_P(\lambda)
\end{equation}
with domain
\begin{equation}
  \D(f(P)) =
  \biggl\{
    x \in X \colon
    \int_{\sigma(P)} |f(\lambda)|^2 \,
    \d \langle E_P(\lambda)x, x \rangle < \infty
  \biggr\} \,.
  \label{e.domain}
\end{equation}
\begin{definition}
Let $A$, $B$ be two densely defined normal operators.  If
$\D(AB) \subseteq \D(BA)$ and $AB = BA$ on $\D(AB)$, we write
$AB \subseteq BA$ and say that ``$A$ commutes with $B$.''
\end{definition}
We fix in what follows an operator $J\in \L(X)$ such that 
\begin{equation}
  J L \subseteq L J \,, \quad
  J^* = -J \,, \quad
  \text{and} \quad
  J^2 = -I \,. 
\end{equation}
We now collect important elementary properties of the operators $J$,
$A_c$, and $B_c$.
\begin{lemma}\label{LemSpecTheo}
The operators $J$, $A_c$, and $B_c$ satisfy the following properties.
\begin{enumerate}[label={\upshape(\roman*)}]
\item\label{i} $\D(L) \subseteq \D(A_c) = \D(B_c) = \D(J A_c) = \D(J B_c)$,

\item\label{ii} $\|A_c u\|_{\D(L^j)}\leq \frac 12  \,
\|u\|_{\D(L^{j+1})}$ for any $j\in \N$, 

\item\label{iii} $J$ and $\e^{tJ}$ commute with $f(L)$ for any
measurable function $f\colon \R \to \R$ and $t\in \R$; in particular,
$J$ and $\e^{tJ}$ commute with $A_c$, $B_c$, and $B_c^{-1}$,

\item\label{iv} $\e^{ t J A_c}$ commutes with $J$ and $f(L)$ for any
measurable function $f \colon \R \to \R$ and $t\in \R$,

\item\label{v} $\| \e^{ t J A_c} \| \leq 1$, and

\item\label{vi} $\|(\e^{ t J A_c}- I)u\| \leq \frac 12 \, |t| \,
\|u\|_{\D(L)}$.
\end{enumerate}
\end{lemma}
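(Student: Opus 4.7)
The plan is to apply the Borel functional calculus for the self-adjoint operator $L$ and exploit the commutation assumption $JL \subseteq LJ$. Writing $B_c = g_1(L)$ and $A_c = g_2(L)$ with $g_1(\lambda) = c^{-1}\sqrt{\lambda+c^2}$ and $g_2(\lambda) = c^2(g_1(\lambda) - 1)$, the bound $0 \le g_2(\lambda) \le \lambda/2$, which comes from $\sqrt{1+x} \le 1 + x/2$, together with the domain formula \eqref{e.domain} immediately gives $\D(L) \subseteq \D(A_c)$. Since $A_c$ and $c^2 B_c$ differ only by the bounded operator $c^2 I$, they share a domain, and pre-composing with the bounded operator $J$ does not change domains either; this takes care of \ref{i}. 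For \ref{ii}, I would first rationalize the square root to obtain the algebraic identity
\begin{equation*}
  A_c = c^2 (B_c - I) = c^2 \, \frac{B_c^2 - I}{B_c + I} = L \, (B_c + I)^{-1}.
\end{equation*}
Since $L \ge 0$ implies $B_c \ge I$ spectrally, $\|(B_c+I)^{-1}\| \le \tfrac12$; commuting $L^j$ past $(B_c+I)^{-1}$ in the functional calculus yields $\|L^j A_c u\| \le \tfrac12 \|L^{j+1} u\|$ for every $j \in \N$, from which the graph-norm statement \ref{ii} follows.

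For \ref{iii}, I would use that $JL \subseteq LJ$ with bounded $J$ implies $J (L-z)^{-1} = (L-z)^{-1} J$ for every $z \in \rho(L)$; a standard double-commutant argument then transfers this to the spectral projections of $L$, hence to $f(L)$ for every Borel measurable $f\colon \R \to \R$. Since $J^2 = -I$, we have $\e^{tJ} = \cos(t)\, I + \sin(t) \, J$, so $\e^{tJ}$ also commutes with $f(L)$; specializing to $f = g_1, g_2, 1/g_1$ delivers the statements about $A_c$, $B_c$, and $B_c^{-1}$. For \ref{iv} and \ref{v}, by \ref{iii} the operator $J$ commutes with $A_c$, hence on $\D(A_c)$
\begin{equation*}
  (J A_c)^* = A_c^* J^* = - A_c J = - J A_c,
\end{equation*}
so $J A_c$ is skew-adjoint. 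Stone's theorem then produces a unitary group $\{\e^{tJA_c}\}_{t \in \R}$, which settles \ref{v}. Item \ref{iv} follows because anything commuting with the generator commutes with the group, and both $J$ and $f(L)$ commute with $J A_c$ by \ref{iii}.

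Finally, for \ref{vi}, I would invoke the Duhamel-type identity
\begin{equation*}
  \e^{tJA_c} u - u = \int_0^t \e^{sJA_c} \, J A_c \, u \, \d s,
\end{equation*}
valid for $u \in \D(A_c) \supseteq \D(L)$, and estimate using unitarity from \ref{v}, $\|J\| = 1$, and \ref{ii} with $j = 0$, giving $\|(\e^{tJA_c} - I) u\| \le |t| \, \|A_c u\| \le \tfrac{|t|}{2} \, \|Lu\| \le \tfrac{|t|}{2} \, \|u\|_{\D(L)}$. No step is genuinely hard; the only real content is the rationalization identity $A_c = L (B_c + I)^{-1}$ behind \ref{ii}, which factors the unbounded part of $A_c$ as $L$ composed with a contraction of norm $\tfrac12$ and thereby supplies the quantitative constant that drives \ref{vi}.
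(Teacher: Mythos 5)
Your proof is correct, and for items \ref{i}--\ref{iii} it follows essentially the same functional-calculus route as the paper: the paper proves \ref{ii} by the pointwise spectral inequality $c\sqrt{\lambda+c^2}-c^2\leq \lambda/2$ inserted into the spectral representation of $\|A_c u\|_{\D(L^j)}$, which is exactly the content of your rationalization $A_c = L\,(B_c+I)^{-1}$ with $\|(B_c+I)^{-1}\|\leq \tfrac12$ — a tidy algebraic repackaging of the same bound — and \ref{iii} is obtained in the paper by citing Schm\"udgen's Propositions 5.15 and 4.23 rather than your resolvent argument, an inessential difference. Where you genuinely diverge is \ref{iv}--\ref{vi}: the paper never invokes Stone's theorem or semigroup theory; it defines and controls $\e^{tJA_c}$ through the joint spectral measure $E_J\otimes E_L$, deduces \ref{iv} from commutation of $E_J$, $E_L$ with the product measure, gets \ref{v} from skew-symmetry, and proves \ref{vi} by the scalar estimate $|\e^{\i x}-1|\leq |x|$ inside the spectral integral followed by \ref{ii}; you instead show $JA_c$ is skew-adjoint (using boundedness of $J$ so that $(JA_c)^*=A_c^*J^*$ and invariance of $\D(A_c)$ under $J$), obtain the unitary group from Stone's theorem, and prove \ref{vi} via the Duhamel identity $\e^{tJA_c}u-u=\int_0^t \e^{sJA_c}JA_c u\,\d s$ plus \ref{ii} with $j=0$, arriving at the same constant. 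Both routes are valid; the paper's joint-calculus argument keeps everything inside the spectral theorem, while yours leverages standard group theory and is arguably more transparent. One small point to tighten in \ref{iv}: the principle ``anything commuting with the generator commutes with the group'' is a theorem for \emph{bounded} operators, so for an unbounded $f(L)$ you should first get commutation of $\e^{tJA_c}$ with the spectral projections $E_L(K)$ (which are bounded and commute with $JA_c$) and then pass to general measurable $f$ exactly as the paper does via Schm\"udgen's Proposition 4.23; this is a one-sentence repair, not a gap in substance.
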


\begin{proof}
The inclusion in \ref{i} follows directly from  
\begin{equation}
  \int_{\sigma(L)} |\lambda + c^2| \,
    \d \langle E_L(\lambda) u, u \rangle
  \leq (c^2 + \tfrac 12) \, \|u\|^2 + \tfrac 12 \int_{\sigma(L)}
    |\lambda |^2 \, \d \langle E_L(\lambda) u, u \rangle \,;
\end{equation}
the remaining identities are obvious. To prove \ref{ii}, we note that,
for $\lambda \geq 0$
\begin{equation}
  c\sqrt{\lambda + c^2} - c^2
  \leq \frac\lambda2 \,,
\end{equation}
and
\begin{equation}
  \| A_c u\|^2_{\D(L^j)}
  = \int_{\sigma(L)} \left| c \, \sqrt{\lambda + c^2} - c^2\right| ^2
    (1 + |\lambda |^2)^j \, \d \langle E_L(\lambda) u, u \rangle \,.
\end{equation}
For \ref{iii}, we recall that $J$ is bounded and commutes with $L$.
Thus, by \cite[ Proposition~5.15]{Schmuedgen},
$J \, E_L(K) = E_L(K) \, J$ for all $K \in
\mathcal{B}(\CC)$. Consequently,
$\e^{t J} \, E_L(K) = E_L(K) \, \e^{t J}$ for all
$K \in \mathcal{B}(\CC)$ and $t\in \R$.  Then the claim is a direct
consequence of \cite[Proposition~4.23]{Schmuedgen}.  For \ref{iv},
note that
\begin{equation}
  \e^{t J A_c}
  = \int_{\CC^2} \e^{c t \lambda (\sqrt{\mu ^2 +c^2}-c)} \,
    \d E_J(\lambda) \, \d E_L(\mu)
\end{equation}
where the integral is with respect to the product measure
$E_J \otimes E_L( K_1 \times K_2) = E_J(K_1) \, E_L(K_2)$ for all
$K_1, K_2 \in \mathcal{B}(\CC)$.  Hence, $E_J$ and $E_L$ commute with
$E_J \otimes E_L$ in the sense that
$E_J \otimes E_L( K_1\times K_2) \, E(K_3) = E(K_3) \, E_J \otimes E_L
(K_1\times K_2)$ for all $K_1, K_2, K_3 \in \mathcal{B}(\CC)$.  Once
again, the claim follows from \cite[Proposition~4.23]{Schmuedgen}.
Estimate \ref{v} is a direct consequence of the skew-symmetry of $J$.
Finally, to prove estimate \ref{vi}, let $u\in \D(L)$.  Since the
spectrum of $J$ is purely imaginary and $|\e^{\i x} - 1|^2 \leq x^2$
for $x\in \R$, we estimate
\begin{align}
  \|(\e^{ t J A_c}- I)u\|^2
  & = \int_{\sigma(J) \times \sigma(L)}
      |\e^{t c\lambda \bigl(\sqrt{\mu ^2 +c^2}-c\bigr)} - 1 |^2 \,
      \langle \d E_J(\lambda) \, \d E_L(\mu) u , u \rangle
    \notag \\
  & \leq t^2 \int_{\sigma(J) \times \sigma(L)}
      |c\lambda \, \bigl(\sqrt{\mu ^2 +c^2}-c \bigr) |^2 \,
      \langle \d E_J(\lambda) \, \d E_L(\mu) u , u \rangle
    \notag \\
  & \leq t^2 \, \| J A_c u\|^2 \,.
\end{align}
The claim then follows by estimate \ref{ii}.
\end{proof}

\begin{remark}
Lemma~\ref{LemSpecTheo}\ref{iii} and~\ref{iv} imply that if $P$ and
$Q$ are two operators such that $P$ is bounded and $ PQ \subseteq QP$,
then $\D(PQ) = \D(Q)$ and $P(\D(Q)) \subseteq \D(Q)$.  In other words,
the domain of $Q$ is invariant under any \emph{bounded} operator
commuting with $Q$.  In this paper, the analysis of the numerical
schemes assumes solutions of \eqref{e.MainEq} in $\D(L)$ which is, in
view of this remark, invariant under any bounded operator commuting
with $L$, in particular $J$, $\e^{t J}$, and $\e^{tJ A_c}$.
\end{remark}

\section{Quadrature for  Banach-space-valued functions}
\label{Sec.Quad}

In this section, let $(X, \|\cdot\|)$ be a complex Banach space and
$\Omega \subset \CC$ be open.  A function  $F \colon \Omega \to X$
is analytic if it is differentiable, i.e., provided for every $z_0 \in
\Omega$ there exists $F'(z_0) \in X$ such that
\begin{equation}
  F'(z_0) = \lim_{z\to z_0} \frac{F(z)- F(z_0)}{z- z_0} \,.
\end{equation}
The following simple lemma shows that estimates on the quadrature
error for differentiable complex-valued functions directly imply a
corresponding estimate for $X$-valued functions.

\begin{lemma}
\label{LemNumIntRuleBanach}
Let $I$ be an open interval on the real line and $\mu$ a measure on
$I$, possibly discrete.  Suppose a quadrature rule with nodes
$x_k \in I$ and weights $\omega_k$, $k=1,\dots,n$ satisfies the error
estimate
\begin{equation}
  \biggl\vert
    \int_I f(x) \, \d \mu (x)  - \sum_{k =1}^n \omega_k \, f(x_k)
  \biggr\vert
  \leq C(n,I) \, \sup_{x \in I} |f^{(p)}(x)| \,,
  \label{NumIntRule}
\end{equation}
for some $p \in \N$ and every $f \in C^p(I,\CC)$.
Then the quadrature rule satisfies the error estimate
\begin{equation}
  \biggl\Vert
    \int_I F(x) \, \d \mu(x) - \sum_{k =1}^n \omega_k \, F(x_k)
  \biggr\Vert
  \leq C(n,I) \, \sup_{x \in I} \Vert F^{(p)}(x) \rVert \,,
  \label{NumIntRuleBanach}
\end{equation}
where the integral is understood in the Bochner-sense, for every
$F \in C^p(I,X)$. 
\end{lemma}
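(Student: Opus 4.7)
The plan is to reduce the Banach-space-valued estimate to the scalar estimate \eqref{NumIntRule} by duality, via the Hahn--Banach theorem. For any $y \in X$ one has the representation
\begin{equation}
  \|y\| = \sup_{\ell \in X^*,\,\|\ell\| \leq 1} |\ell(y)| \,,
\end{equation}
so it suffices to bound $|\ell(E(F))|$ uniformly in such $\ell$, where
\begin{equation}
  E(F) = \int_I F(x) \, \d \mu(x) - \sum_{k=1}^N \omega_k \, F(x_k) \,.
\end{equation}

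First I would fix an arbitrary $\ell \in X^*$ with $\|\ell\|\leq 1$ and define the scalar function $f_\ell \colon I \to \CC$ by $f_\ell(x) = \ell(F(x))$. Since $\ell$ is continuous and linear, it commutes with the Bochner integral (a standard property of the Bochner integral, see e.g.\ Diestel--Uhl), and of course with the finite sum. Hence
\begin{equation}
  \ell(E(F)) = \int_I f_\ell(x) \, \d \mu(x) - \sum_{k=1}^N \omega_k \, f_\ell(x_k) \,.
\end{equation}

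Next I would verify that $f_\ell \in C^p(I,\CC)$ with $f_\ell^{(p)}(x) = \ell(F^{(p)}(x))$. This follows by an elementary induction: differentiability of $F$ in the Banach sense together with linearity and continuity of $\ell$ gives $(\ell \circ F)'(x) = \ell(F'(x))$ at every point $x \in I$, and the $p$-th derivative is obtained by iteration. Applying the scalar hypothesis \eqref{NumIntRule} to $f_\ell$ yields
\begin{equation}
  |\ell(E(F))| \leq C(N,I) \, \sup_{x \in I} |\ell(F^{(p)}(x))| \leq C(N,I) \, \sup_{x \in I} \|F^{(p)}(x)\| \,,
\end{equation}
using $\|\ell\|\leq 1$ in the last step. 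Taking the supremum over all such $\ell$ and invoking Hahn--Banach gives \eqref{NumIntRuleBanach}.

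There is no genuine obstacle here; the only point that requires a moment of care is the interchange of $\ell$ with the Bochner integral, which is standard, and the fact that $F \in C^p(I,X)$ entails $f_\ell \in C^p(I,\CC)$ with the expected formula for the derivatives. Everything else is a one-line application of Hahn--Banach together with the scalar hypothesis.
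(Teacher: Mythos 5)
Your proof is correct and follows essentially the same route as the paper: compose $F$ with a bounded linear functional, use that the functional commutes with the Bochner integral and with differentiation, apply the scalar estimate, and conclude by Hahn--Banach duality. The only cosmetic difference is that you take the supremum over the unit ball of $X^*$, while the paper picks a single norming functional $\psi$ with $\psi(e_N)=\lVert e_N\rVert$; both steps rest on the same Hahn--Banach argument.
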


\begin{proof}
Fix $\psi \in X^*$.  Let
\begin{equation}
  e_n = \int_I F(x) \, \d \mu(x) - \sum_{k =1}^n \omega_k \, F(x_k) \,.
\end{equation}
Due to the properties of the Bochner integral,
\begin{equation}
  \psi (e_n)
  = \int_a^b \psi \circ F (x) \, \d \mu
  - \sum_{k =1}^n \omega_k \, \psi \circ F (x_k) \,,
\end{equation}
so that, applying \eqref{NumIntRule} to $f = \psi \circ F$, we obtain
\begin{align}
  \lvert \psi (e_n) \rvert
  & \leq C(n,I) \, \sup_{x \in I } \,
           \bigl\vert \tfrac{\d^p}{\d x^p}[\psi \circ F](x) \bigr\vert
    \notag \\
  & \leq C(n,I) \, \lVert \psi \rVert_* \,
           \sup_{x\in I} \, \lVert F^{(p)}(x) \rVert \,.
\end{align}
By the Hahn--Banach theorem, we can choose $\psi \in X^*$ with
$\lVert \psi \rVert_* \leq 1$ such that
$\psi(e_n) = \lVert e_n \rVert$.  This implies
\eqref{NumIntRuleBanach}.
\end{proof}

With the help of this lemma, we lift three known estimates for the
quadrature error of complex-valued functions to the Banach space
setting.  The first concerns the trapezoidal rule approximation for
the integral of a $1$-periodic function $F$, namely the uniformly
weighted Riemann sum
\begin{equation}
  T_n(F) = \frac{1}{n} \sum_{k=0}^{n-1} F \Bigl( \frac kn \Bigr) \,.
\end{equation}
For given $a>0$, let
\begin{equation}
  \Omega_a = \{z \in \mathbb{C} \colon -a < \operatorname{Im}z < a \} \,.
\end{equation}
Then the following estimate, proved for $X=\CC$ in
\cite{TrefethenW:2014:ExponentiallyCT}, holds true.

\begin{theorem}\label{ThmTrapezBanach}
Let $F$ be an $X$-valued function, $1$-periodic on the real line,
analytic with $\|F(z)\| \leq A$ on the strip $\Omega_a$ for some
$a>0$. Then for any $n \in \N$,
\begin{equation}
  \label{estimOscilIntegBanach}
  \biggl\Vert
    \int_0^1 F(x) \, \d x - T_n(F)
  \biggr\Vert
  \leq  \frac{2A}{\e^{a n} - 1} \,.
\end{equation}
The constant $2$ is as small as possible. 
\end{theorem}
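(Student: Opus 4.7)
The plan is to reduce to the scalar case via a Hahn--Banach duality argument, parallel to the proof of Lemma~\ref{LemNumIntRuleBanach}, but invoking the Trefethen--Weideman estimate for scalar periodic analytic functions in place of the derivative-based quadrature estimate. The lemma as stated does not apply directly, because its error bound is in terms of $\sup_x \lVert F^{(p)}(x)\rVert$ whereas here we need a bound in terms of $\sup_z \lVert F(z)\rVert$ on the strip $\Omega_a$. Nevertheless, the same duality trick goes through once one checks that scalar-valued analyticity and the uniform bound are preserved under composition with a bounded linear functional.

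Concretely, I would proceed as follows. Fix $\psi \in X^*$ with $\lVert \psi \rVert_* \leq 1$ and set $f = \psi \circ F$. Since $\psi$ is continuous and linear, $f$ is $1$-periodic and analytic on $\Omega_a$, with $\lvert f(z) \rvert \leq \lVert \psi \rVert_* \, \lVert F(z) \rVert \leq A$ for every $z \in \Omega_a$. The scalar Trefethen--Weideman estimate \cite{TrefethenW:2014:ExponentiallyCT} then gives
\begin{equation}
  \biggl\vert \int_0^1 f(x) \, \d x - T_N(f) \biggr\vert
  \leq \frac{2A}{\e^{aN}-1} \,.
\end{equation}
Letting
\begin{equation}
  e_N = \int_0^1 F(x) \, \d x - T_N(F) \,,
\end{equation}
the linearity and continuity of $\psi$ combined with the defining property of the Bochner integral yield $\psi(e_N) = \int_0^1 f(x) \, \d x - T_N(f)$, so the scalar bound provides $\lvert \psi(e_N) \rvert \leq 2A /(\e^{aN}-1)$ for every such $\psi$. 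By Hahn--Banach, there exists $\psi \in X^*$ with $\lVert \psi \rVert_* \leq 1$ and $\psi(e_N) = \lVert e_N \rVert$, which delivers \eqref{estimOscilIntegBanach}. Sharpness of the constant $2$ follows because $X = \CC$ is an allowed choice, reducing to the already sharp scalar inequality.

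There is no serious obstacle here: the only point that needs slight care is verifying that $\psi \circ F$ is analytic on the full strip with the same bound $A$, which is immediate from the continuity and norm bound of $\psi$. Everything else is a line-by-line repetition of the duality argument used in Lemma~\ref{LemNumIntRuleBanach}, with the scalar quadrature estimate swapped out.
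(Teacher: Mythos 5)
Your proposal is correct and follows essentially the same route as the paper, which lifts the scalar Trefethen--Weideman estimate to the Banach-space setting via the Hahn--Banach duality argument underlying Lemma~\ref{LemNumIntRuleBanach}. You rightly note that the lemma's literal statement (bound via $\sup_x\lVert F^{(p)}(x)\rVert$) does not apply verbatim and that one simply repeats the duality argument with the strip bound $\lvert \psi\circ F(z)\rvert \leq A$, which is exactly the intended argument.
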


The second concerns the Gauss formula for the integral of a function
$f$ defined on the interval $[-1,1]$,
\begin{equation}
  \label{GaussQuad}
  G_m (f) = \sum_{k = 1}^m \omega_k \, f(\xi_k) \,,
\end{equation}
where the $\xi_k$ are the zeros of the Legendre polynomial $p_m$ of
degree $m$ and the weights are given by
\begin{equation}
  \omega_k = \frac{2}{(1-\xi_k^2) \, [p'_m (\xi_k)]^2} \,.
\end{equation}
For given $b>a$ and $\rho > \frac 12(b-a)$, let $E_\rho(a, b)$ denote
the ellipse with foci $a$, $b$ such that the lengths of its minor and
major semiaxes sum up to $\rho$.  Namely,
\begin{equation}
  E_\rho(a, b)
  = \bigl\{
      z \in \CC \colon
      z = \tfrac 12  \, (\rho \, \e^{\i \theta}
          + \tfrac 14 (b-a)^2\, \rho^{-1} \, \e^{-\i \theta}) + \tfrac 12 (a+b), 0 \leq \theta < 2 \pi 
    \bigr\} \,,
\end{equation}
and $\Sigma_\rho(a, b)$ the open region in $\CC$ bounded by $E_\rho(a, b).$

The formula \eqref{GaussQuad} can easily be written out for functions
defined on an arbitrary interval $[a,b]$ using the affine change of
variables
\begin{equation}
  \ell \colon
  \Sigma_{\frac{2\rho}{b-a}}(-1,1) \to \Sigma_{\rho}(a,b) \,,
  \quad \ell(x) = \frac{b-a}{2}(x+1) + a \,.
  \label{e.affine}
\end{equation}

\begin{theorem}\label{ThmGaussBanach}
Fix $k \in \N$, $\eps_0 >0$, and $\rho > \frac 12 \, (b-a)$.  Set
$\alpha = \eps_0\,\min\{0,a,b\}$ and $\beta=\eps_0\,\max\{0,a,b\}$.
Let $F \colon [\alpha, \beta]\times \Sigma_\rho(a, b) \to X$ be such
that $\zeta \mapsto F(\zeta, z)$ is $k$-times differentiable for any
$z\in \Sigma_\rho(a, b)$ and that
$z \mapsto \partial_1^i F(\zeta, z)$, where $\partial_1$ denotes the
partial derivative with respect to the first argument, is analytic on
$\Sigma_\rho(a, b) $ for $i = 0, \dots, k-1$ and any
$\zeta \in [\alpha, \beta]$ with
\begin{subequations}
\begin{gather}
  \max_{i} \sup_{[\alpha, \beta]\times\Sigma_\rho(a, b)} 
    \lVert \partial_1^i F \rVert \leq A_{\mathrm{an}} \,,
    \\
  \sup_{[\alpha, \beta]\times [a,b ] } \lVert \partial_1^k F \rVert
    \leq A_{\mathrm{dif}} \,.
\end{gather}
\end{subequations}
We abbreviate $f(x) = F(\eps x, x)$.  Then, for any $m \in \N$ and $\eps\in(0, \eps_0]$,
\begin{equation}
  \label{GaussEstimX1Banach}
  \biggl\Vert
    \int_a^b f(x) \, \d x - G_m(f)
  \biggr\Vert
  \leq \frac{16 \, A_{\mathrm{an}} \, \e^{\eps (b-a)} \, \rho^2}{(2\rho-b+a)} \,
       \biggl( \frac{b-a}{2\rho} \biggr)^{2m+1} 
  +  \frac{2 \, A_{\mathrm{dif}} \, (b-a)^{k+1} \, \eps^k}{k!} \,,
\end{equation}
where 
\begin{equation}
  G_m(f) = \frac{b-a}{2} \sum_{i=1}^m\omega_i \, f(\eta_i)
\end{equation}
with nodes $\eta_i = \ell(\xi_i)$.
\end{theorem}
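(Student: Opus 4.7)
The plan is to reduce to the scalar case by the duality argument of Lemma~\ref{LemNumIntRuleBanach}, and then to split $f(x) := F(\eps x, x)$ into an analytic part plus a small remainder. Concretely, I would Taylor-expand $\xi \mapsto F(\xi, x)$ at $\xi = \eps a$ to order $M-1$ and evaluate at $\xi = \eps x$, writing $f = h + r$ with
\begin{equation*}
h(x) = \sum_{k=0}^{M-1} \frac{(\eps (x-a))^k}{k!} \, \partial_x^k F(\eps a, x),
\end{equation*}
and $r(x)$ given by the integral Taylor remainder. A change of variables in the remainder and the hypothesis $\lVert \partial_x^M F(\xi, x) \rVert \leq A_{\mathrm{dif}}$ yield the uniform bound $\lVert r(x) \rVert \leq \eps^M (x-a)^M A_{\mathrm{dif}}/M!$ on $[a, b]$.

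Since each $\partial_x^k F(\eps a, \cdot)$ is analytic on $\Sigma_\rho(a, b)$ with norm $\leq A_{\mathrm{an}}$ and $(z - a)^k$ is entire, $h$ extends analytically to $\Sigma_\rho(a, b)$, and summing the truncated exponential gives $\lVert h(z) \rVert \leq A_{\mathrm{an}} \, \e^{\eps (b-a)}$. Transferring the quadrature from $[a, b]$ to $[-1, 1]$ via $\ell$ converts the Bernstein parameter to $\tilde\rho = 2\rho/(b-a)$; the classical Gauss--Legendre error estimate for functions analytic on the Bernstein ellipse, multiplied by the Jacobian $(b-a)/2$, then produces the first summand in \eqref{GaussEstimX1Banach}. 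For the remainder $r$, the triangle inequality combined with the identity $\sum_{k=1}^N \omega_k = b - a$ (Gauss weights sum to the interval length) yields
\begin{equation*}
  \biggl\lVert \int_a^b r(x) \, \d x - G_N(r) \biggr\rVert
  \leq 2(b-a) \sup_{x \in [a, b]} \lVert r(x) \rVert
  \leq \frac{2 A_{\mathrm{dif}} (b-a)^{M+1} \eps^M}{M!},
\end{equation*}
which is the second summand. Adding the two bounds completes the proof.

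The main obstacle is the analytic Gauss--Legendre error estimate in its precise form, combined with bounding $h$ on the Bernstein ellipse: strictly speaking, $|z - a|$ on $E_\rho(a, b)$ may slightly exceed $b - a$ (the sharp bound being $\rho/2 + (b-a)^2/(8\rho) + (b-a)/2$), so deriving $\lVert h(z) \rVert \leq A_{\mathrm{an}} \e^{\eps(b-a)}$ uniformly on the contour requires either a slight shrinkage of the Bernstein ellipse or absorption of the discrepancy into the multiplicative constant $16$. Everything else in the proof is routine: the Hahn--Banach reduction, the integral Taylor remainder, and the triangle inequality against the quadrature sum.
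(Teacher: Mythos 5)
Your proposal is correct and follows essentially the same route as the paper's proof: Taylor expansion of $F$ in its first argument about $\eps a$, the scalar Gauss quadrature error bound for functions analytic on $\Sigma_\rho(a,b)$ applied to the analytic part (lifted to $X$-valued functions via the duality argument of Lemma~\ref{LemNumIntRuleBanach}), and a triangle-inequality estimate of the order-$M$ remainder for both the continuum integral and the quadrature sum. The only differences are cosmetic --- you use the integral form of the remainder and bound the summed analytic part $h$ at once, whereas the paper uses the Lagrange form and applies the Chawla--Jain estimate term by term to each $f_k(z)=(z-a)^k\,\partial_x^k F(\eps a,z)$ --- and the discrepancy you flag between $\sup_{z\in\Sigma_\rho(a,b)}|z-a|$ and $b-a$ is present, implicitly, in the paper's own argument as well.
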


\begin{proof}
Writing the Taylor series with respect to the first variable of $F$,
we find that for every $x\in [a, b]$ there exists
$\xi = \xi(x)\in [a, b]$ such that
\begin{equation}
  f(x) = \sum_{i=0}^{k-1} \frac{(x-a)^i \, \eps^i}{i!} \,
           \partial_1^i F(\eps a, x)
        + \frac{(x-a)^k \, \eps^k}{k!} \, \partial_1^k  F(\eps\xi, x) \,.
  \label{e.taylor}
\end{equation}
Thus, the following estimate, proved for $X=\CC$ in
\cite{ChawlaJ:1968:ErrorEG}, holds true for the quadrature formula
\eqref{GaussQuad} applied on each
$f_i(z) = (z-a)^i \, \partial_1^i F(\eps a, z)$, $i=0, \dots, k-1$,
which is analytic and bounded on $\Sigma_{\rho}(a, b)$:
\begin{multline}
  \biggl\Vert
    \int_a^b f_i(x) \, \d x - G_m(f_i)
  \biggr\Vert \\
  \leq \frac{16 \, \rho^2}{(2\rho-b+a)} \,
       \biggl( \frac{b-a}{2\rho} \biggr)^{2m+1} \,
       \sup_{z\in \Sigma_\rho(a, b)}
       \lVert (z-a)^i \, \partial_1^i F(\eps a, z) \rVert \,.
\end{multline}
This yields the first term on the right of \eqref{GaussEstimX1Banach}.
The Lagrange remainder in \eqref{e.taylor} is estimated independently
for the continuum integral over the interval $[a,b]$ and for the
discrete integral $G_m$, in both cases yielding the same contribution
to the second term on the right of \eqref{GaussEstimX1Banach}.
\end{proof}

The third concerns the Gauss formula for the discrete sum
$\sum_{j=0}^{N-1} F(x_j)$ on equidistant nodes
\begin{equation}
  x_j = -1 + \frac{2 j}{N-1} \,, \qquad 0\leq  j \leq N-1
\end{equation}
with
\begin{equation}\label{HaidarMarcelSum}
  \frac{2}{N} \sum_{j=0}^{N-1} F(x_j)
  \approx S_n(F)
  \equiv \sum_{k=1}^n w_{k,N} \, F(s_{k,N}),
\end{equation}
where the quadrature nodes $s_{k,N}$ are the zeros of the so-called
Gram polynomial $p_{n,N}$ of degree $n$.  Such polynomials are defined
by their orthonormality with respect to the discrete equidistant sum,
namely
\begin{equation}
  \sum_{j=0}^{N-1} p_{n, N}(x_j) \, p_{k, N}(x_j) = \delta_{nk} \,.
\end{equation}
The weights $w_{k,N}$ are given by
\begin{equation}
w_{k,N} = \frac{a_{n,N}}{a_{n-1,N}} \, \frac{2}{N\,p'_{n, N}(s_{k,N}) \, p_{n-1, N}(s_{k,N})} \,,
\end{equation}
where $a_{n,N}$ denotes the leading coefficient of $p_{n,N}$.  For a
detailed derivation and discussion, see
\cite{AreaDG:2014:ApproximateCS,AreaDG:2016:ApproximateCS,
MohamadO:2021:NumericalIF}. 

\begin{theorem}\label{ThmGaussSumBanach}
Fix $n \in \N$ and let $F\colon [a, b] \to X$ be a $2n$-times
differentiable function with $\lVert F^{(2n)} \rVert \leq A$ on
$[a,b]$.  Then
\begin{equation}
  \label{GaussSumEstimX1Banach}
  \biggl\Vert
    \frac{b-a}{N-1} \sum_{j=0}^{N-1} F(y_j)
    - \frac{N \, (b-a)}{2(N-1)} \, S_n(F)
  \biggr\Vert
  \leq \frac{16 \, A \, (b-a)^{2n+1} \, n!^4}{(2n+1) \, (2n)!^3} \,.
\end{equation}
Formula $S_n(f)$ is defined with nodes $r_{k, N} = \ell(s_{k,N})$ and
the equidistant summation points are given by $y_j = \ell(x_j)$, where
$\ell$ is the affine change of variable \eqref{e.affine}.
\end{theorem}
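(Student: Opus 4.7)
The plan is to deduce \eqref{GaussSumEstimX1Banach} from Lemma~\ref{LemNumIntRuleBanach} applied with $p = 2M$. We view the left-hand side of \eqref{GaussSumEstimX1Banach} as the error of a quadrature rule for the discrete measure
\[
  \mu = \frac{b-a}{m-1} \sum_{j=0}^{m-1} \delta_{y_j}
\]
on an open interval $I$ containing $[a,b]$, where the quadrature is given by the $M$ nodes $\eta_{k,m} = \ell(\xi_{k,m})$ with weights $\frac{m \, (b-a)}{2(m-1)} \, \omega_{k,m}$ for $k = 1, \dots, M$. In this reading, the lemma reduces the Banach-valued bound to the analogous complex-valued bound.

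The scalar input \eqref{NumIntRule} required by the lemma is the complex-valued counterpart of \eqref{GaussSumEstimX1Banach}, namely
\[
  \biggl\vert \frac{b-a}{m-1} \sum_{j=0}^{m-1} f(y_j) - \frac{m \, (b-a)}{2(m-1)} \, S_M(f) \biggr\vert \leq \frac{12 \, (b-a)^{2M+1} \, M!^4}{(2M+1) \, (2M)!^3} \, \sup_{x \in [a,b]} \vert f^{(2M)}(x) \vert
\]
for every $f \in C^{2M}([a,b], \CC)$. This estimate is exactly what is proved in \cite{MohamadO:2019:NumericalIF}, building on the Gram polynomial orthogonality from \cite{AreaDG:2014:ApproximateCS, AreaDG:2016:ApproximateCS}. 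First I would verify, via the affine change of variables $\ell$, that it suffices to treat the reference interval $[-1,1]$ with equidistant nodes $x_j = -1 + 2j/(m-1)$ and Gram-polynomial nodes $\xi_{k,m}$; the factor $(b-a)^{2M+1}$ in the error bound tracks this rescaling, as the $2M$-th derivative transforms by $(b-a)^{-2M}$ and the outer measure contributes one further factor of $b-a$.

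Once the scalar bound is in hand, Lemma~\ref{LemNumIntRuleBanach} applies directly and yields \eqref{GaussSumEstimX1Banach} via its Hahn--Banach duality argument, with no further analytic work required. The only real obstacle is bookkeeping: one must confirm that the constant $12 \, M!^4 / [(2M+1) \, (2M)!^3]$ and all scaling factors carry through correctly when passing from $[-1,1]$ to $[a,b]$ under $\ell$, and when reconciling the normalization $\tfrac{2}{m} \sum_{j=0}^{m-1}$ used to define $S_M$ in \eqref{HaidarMarcelSum} with the normalization $\tfrac{b-a}{m-1} \sum_{j=0}^{m-1}$ appearing on the left of \eqref{GaussSumEstimX1Banach}. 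No part of the argument requires analyticity of $F$, only $2M$-fold differentiability, which matches the hypothesis.
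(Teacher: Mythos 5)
Your two outer reductions---rescaling to $[-1,1]$ by the affine map $\ell$ and passing from $X$-valued to scalar functions via Lemma~\ref{LemNumIntRuleBanach}---are exactly the opening moves of the paper's proof, and they are fine. The gap is what comes next: you declare the scalar estimate, with the precise constant $12\,M!^4/[(2M+1)\,(2M)!^3]$ and the precise normalizations $\frac{b-a}{m-1}\sum_j F(y_j)$ versus $\frac{m(b-a)}{2(m-1)}\,S_M(F)$, to be ``exactly what is proved in \cite{MohamadO:2019:NumericalIF}'' and stop there. That scalar estimate \emph{is} the content of the theorem, and the paper does not treat it as available off the shelf: it proves it from scratch, citing \cite{MohamadO:2019:NumericalIF} only for the explicit leading coefficient $a_{M,m}$ of the Gram polynomial. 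So your proposal outsources the entire analytic core to a citation whose applicability you assert but do not verify.

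Concretely, the steps you would still have to supply are: (i) take the degree-$(2M-1)$ Hermite interpolant $H$ of $F$ at the Gram nodes $\xi_{k,m}$, so that $F(x)-H(x)=\frac{F^{(2M)}(\xi)}{(2M)!}\,q_{M,m}^2(x)$ with $q_{M,m}(x)=\prod_{k=1}^M(x-\xi_{k,m})$; (ii) use that the rule \eqref{HaidarMarcelSum} is exact on polynomials of degree at most $2M-1$ and that $S_M(H)=S_M(F)$ (same nodes), so the error reduces, up to the factor $\tfrac{m}{m-1}$, to $\frac{2}{m}\sum_j\bigl(F(x_j)-H(x_j)\bigr)$; (iii) evaluate $\sum_j q_{M,m}^2(x_j)=a_{M,m}^{-2}$ from the orthonormality of $p_{M,m}=a_{M,m}\,q_{M,m}$, inserting the explicit formula for $a_{M,m}$; and (iv) bound the resulting ratio $\frac{(m+M)!}{(m-M-1)!\,(m-1)^{2M+1}}\le 6$, which is precisely where the factor $12=2\cdot 6$ in the constant originates. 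None of this appears in your write-up, and it is not mere bookkeeping: the value of the constant, its uniformity in $m$, and the fact that only $2M$-fold differentiability (no analyticity) is needed all come out of exactly these steps. Unless the cited reference states the estimate in this exact form---which the structure of the paper's own proof suggests it does not---your argument is incomplete at its central point.
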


\begin{proof}
Assume first that $a = -1$ and $b =1$; the general case then follows
via the affine change of variable $\ell$.  Assume further that
$X = \R$.  The general case where $X$ is a complex Banach space
follows by applying Lemma~\ref{LemNumIntRuleBanach}.

Thus, let $H$ be the unique polynomial of degree $2n-1$ satisfying the
Hermite interpolation problem
\begin{equation}
  F(s_{k,N}) = H(s_{k,N}) \,, \quad
  F'(s_{k,N}) = H'(s_{k,N}) \,, \quad
  k = 1, \dots, n \,.
\end{equation}
By Rolle's theorem, for any $x\in [-1, 1]$ there exists
$s(x)\in [-1 ,1]$ such that
\begin{equation}\label{HermiteInterp.}
  F(x)- H(x) = \frac{F^{(2n)}(s)}{(2n)!} \, q^2_{n, N}(x) \,, 
\end{equation}
where $q_{n,N}$ is the polynomial
\begin{equation}
  q_{n, N}(x) = \prod_{k=1}^n (x-x_{k,N}) \,.
\end{equation}
Since \eqref{HaidarMarcelSum} is exact for all polynomials of degree
less than $2n-1$,
\begin{equation}
  \frac{2}{N} \sum_{j=0}^{N-1} F(x_j) = S_n(H) = S_n(F) \,.
\end{equation}
Thus, using \eqref{HermiteInterp.}, we estimate
\begin{align}
  \biggl\Vert\frac{2}{N-1} \sum_{j=0}^{N-1} F(x_j)
    - \frac{N}{N-1} S_n(F)\biggr\Vert
  & =  \frac{2}{N} \sum_{j=0}^{N-1} \lVert F(x_j)-H(x_j) \rVert
    \notag \\
  & \leq \frac{2A}{(N-1) \, (2n)!} \sum_{j=0}^{N-1} q^2_{n, N}(x_j) \,.
\end{align}
Note that ${\rm deg}(q_{n, N}) = n$ and $q_{n, N}$  has the same zeros as the
Gram polynomial $p_{n, N}$.  Hence,
\begin{equation}
  p_{n, N} = a_{n, N} \, q_{n, N} \,,
\end{equation}
where the constant $a_{n, N}$ is given by \cite{MohamadO:2021:NumericalIF}
\begin{equation}
  a_{n, N}= \sqrt{\frac{(2N + 1) \, (N- n -1)!}{(N + n)!}} \,
        \frac{(2n)! \, (N-1)^n}{2^n \, n!^2} \,.
\end{equation}
Since $p_{n, N}$ is normalized,
\begin{equation}
  \sum_{j=0}^{N-1} q^2_{n, N}(x_j)
  = \frac{1}{a^2_{n,N}}
  = \frac{(N + n)!}{(2n + 1) \, (N- n -1)! \, (N-1)^{2n}} \,
    \frac{2^n \, n!^4}{(2n)!^2} \,.
\end{equation}
For $N >n\geq 1$, we have
\begin{align*}
\frac{(N + n)!}{(N- n -1)! \, (N-1)^{2n+1}}
  & =\frac{(N+n)\, (N+n-1)\cdots(N-n)}{(N-1)\, (N-1)\cdots (N-1)}\\
  &\leq \left(1+ \frac 1 n \right) ^{2n+1} \\ 
  &\leq 8\,,
\end{align*}
which completes the proof. 
\end{proof}

In the next section, we will need to approximate a double integral of
a function of two variables where one of the integrals is continuous,
the other discrete.  The following lemma combines estimates
\eqref{GaussSumEstimX1Banach} and \eqref{GaussEstimX1Banach} in the
form required later.

\begin{lemma}\label{2DGauss}
Fix $\gamma\in (0,1)$ and $0< T_0 < \tau_0 <1$.  Given
$F \colon [0, \tau_0] \times [0, T_0] \times \Sigma_{1/(2\gamma)}(0,
1) \to X$, we write $G(s, x,T) = F(s, Tx, x)$ and consider $G$ as a
function on $[0, \tau_0] \times [0, 1]\times [0, T_0]$.  Suppose the
following is true.
\begin{enumerate}[label={\upshape(\roman*)}]
\item For every $(x, T)\in [0, 1]\times[0, T_0] $,
$s\mapsto G(s, x,T)$ is $2n$-times differentiable with
\begin{equation}
  \sup_{[0, \tau_0]\times[0, 1]\times[0,  T_0]} \lVert \partial_1^{2n}G \rVert
  \leq A \,.
\end{equation}
\item For every $s \in [0,\tau_0]$, $(\zeta, z) \mapsto F(s,\zeta, z)$
satisfies the assumptions of Theorem~\ref{ThmGaussBanach} replacing
$(k, \eps_0, \rho, a, b)$ there by $(2n, T_0, 1/(2\gamma), 0, 1)$
here, with bounds that are uniform with respect to $s\in [0, \tau_0]$.
\end{enumerate}
Then there exists a constant $C=C(F, \gamma, \tau_0, T_0,n)$ such that
for any $(\tau,T) \in (0, \tau_0]\times(0, T_0]$ and $(m,N) \in \N^2$
with $T = \tau/N$ and $m<N,$ we have
\begin{equation}\label{2DEstimGauss}
  \biggl\|
    T \sum_{j =0}^{N-1} \int_0^1 G(jT, x,T) \, \d x
    - \frac{\tau}4 \sum_{i = 1}^n \sum_{k = 1}^m
      w_{i,N} \, \omega_k \, G(r_{i,N}, \eta_k,T)
  \biggr\|
  \leq C \, \tau \, (\gamma^{2m} + \tau^{2n}) \,.
\end{equation}
\end{lemma}

\begin{proof}
Using Theorem~\ref{ThmGaussSumBanach}, we find that 
\begin{equation}
 T \sum_{j =0}^{N-1}  G(jT, x,T)  = \frac{\tau}2 \sum_{i = 1}^n 
      w_{i,N}  \, G(r_{i,N}, x,T)  + R(x,T) \,,
  \label{GaussQuad1stVar}
\end{equation}
where, in view of assumption (i), estimate
\eqref{GaussSumEstimX1Banach} implies that there exists a constant $C$
depending on
$\sup_{[0, \tau_0]\times[0, 1]\times [0, T_0]} \lVert \partial_1^{2n}G
\rVert$ and $n$ such that
\begin{equation}
  \lVert R(x,T) \rVert
  \leq C \, \tau^{2n+1} \,.
\end{equation}
Since $F(r_{i, N}, \cdot, \cdot)$ satisfies the assumption of
Theorem~\ref{ThmGaussBanach} on
$[0, T_0]\times \Sigma_{1/(2\gamma)}(0, 1)$ on each node $r_{i,N}$.
Thus, taking the integral of \eqref{GaussQuad1stVar} over $[0,1]$ and
using estimate \eqref{GaussEstimX1Banach}, we obtain
\eqref{2DEstimGauss}.
\end{proof}

\section{Uniformly accurate schemes}\label{Sec.AccuSchm}

Following \cite{BaumstarkFS:2018:UniformlyAE}, we introduce ``twisted
variables'' in which the linear operator in the equation is uniform as
$c \to \infty$.  The twisting technique was also used in an earlier
paper of Castella \emph{et al.}\ \cite{Fauo:2009:AveTecHiOsHamPb} who,
in a related context, developed an averaging technique for
highly-oscillatory Hamiltonian problems.  In a first change of
variables, we set
\begin{subequations}
\begin{gather}
  U = \phi - c^{-2} \, B_c^{-1} J \dot \phi \,, \\
  V = \phi + c^{-2} \, B_c^{-1} J \dot \phi \,.
\end{gather}
\end{subequations}
In terms of the variables $U$ and $V$, equation \eqref{e.MainEq} reads
\begin{subequations}
\label{e.MainEqUV}
\begin{gather}
  J \dot U = -c^2 \, B_c U
    + B_c^{-1} f \bigl( \tfrac 12 \, (U + V), t \bigr) \,, \\
  J \dot V = c^2 \, B_c V
    - B_c^{-1} f \bigl( \tfrac 12 \, (U + V), t \bigr) \,. 
\end{gather}
\end{subequations} 
As a second change of variables, we define
\begin{equation}
  u = \e^{-c^2 t J} \, U \,, \qquad
  v = \e^{c^2 t J} \, V \,.
\end{equation}
In terms of $u$ and $v$, system \eqref{e.MainEqUV} takes the form
\begin{subequations}
  \label{e.DuhamelMainODE2}
\begin{gather}
  \dot u = J A_c u - J B_c^{-1} \, \e^{-c^2 t J} \,
    f \bigl( \tfrac 12 \, (\e^{c^2 t J} u + \e^{-c^2 t J} v), t \bigr) \,, \\
  \dot v = - J A_c v + J B_c^{-1} \, \e^{c^2 t J} \,
    f \bigl( \tfrac 12 \, (\e^{c^2 t J} u + \e^{-c^2 t J} v), t \bigr) \,. 
\end{gather}  
\end{subequations}
We can write this system more compactly in terms of the vector-valued
functions $W = (U, V)^\tr$ and $w = (u, v)^\tr$.  Letting $A_c$ and
$B_c$ act diagonally on $\D(A_c) \times \D(A_c)$ and defining
\begin{subequations}
\begin{gather}
  \J = \begin{pmatrix} J & 0 \\ 0 & -J \end{pmatrix} \,, \\
  \F(W,t) = (-J, J)^\tr \, f\bigl( \tfrac 12 \, (U+V), t \bigr) \,,
\end{gather}
\end{subequations}
we can write
\begin{equation}
  \dot w = \J A_c w + B_c^{-1} \, \e^{-c^2t\J} \,
    \F \bigl(\e^{c^2t\J} w, t \bigr) \,.
\end{equation}
Let $\tau>0$ be the time step of the numerical scheme.  We write
$t_i = i \tau$ for $i = 0, 1, 2, \dots$ and apply the Duhamel formula,
so that
\begin{multline}
  w(t_i + \tau)
  = \e^{\tau \J A_c} \, w(t_i) \\
    + B_c^{-1} \int_0^\tau \e^{(\tau -s) \J A_c} \,
        \e^{-c^2(t_i + s)\J} \,
        \F \bigl( \e^{c^2(t_i + s)\J} \, w(t_i + s), s \bigr) \, \d s \,.
  \label{e.DuhamelMainODE3}
\end{multline}
Since we are free adapt the time $\tau$ of what is to emerge as the
numerical scheme, it is convenient to select $\tau$ as an integer
multiple of the fast period $T = {2\pi}/{c^2}$ so that $\tau = NT$ for
some $N \in \N$.  As $\e^{s \J} = \cos(s) I + \sin(s)\J$,
\begin{equation}
 \e^{\pm c^2 t_i\J } = \e^{2 \pi i N \J} = I
  \label{e.expidentity}
\end{equation}
whenever $i$ is integer.  Thus, such factors drop out of all
expressions further below, reducing the computational cost of the
scheme.

The two following assumptions on the nonlinearity $f$ and on the
solution of \eqref{e.DuhamelMainODE3} are required for the rigorous
analysis of convergence.

\begin{assumption}\label{assumpNonlin}
For given $n \in \N$ and $\mathcal{T}_0 >0,$ we assume that $f$
satisfies the following:
\begin{enumerate}[label={\upshape(\roman*)}]
\item $t \mapsto f(u, t)$ is $2n$-times  differentiable for any $u\in
\D(L^{2n})$,
\item\label{ass.ii} $x \mapsto f(\e^{2 \pi x J} u,t)$ has an analytic
extension to $\Sigma_{\frac{1}{2 \gamma}}(0,1)$ for some
$\gamma\in (0, 1)$ for any $t \in [0,\mathcal{T}_0]$.

\item\label{ass.iii} $f$ is Lipschitz with respect to the first
argument on bounded sets of $X$ with a constant uniform in
$t\in [0, \mathcal{T}_0]$.

\item\label{ass.iv} For any $t\in [0, \mathcal{T}_0]$,
$f(\cdot, t) \colon \D(L^{2n})\to X$ is $2n$-times G\^ateau
differentiable such that
$D^kf(u, t) \in \L( \D(L^{2n-\alpha_k}), D(L^{2n-|\alpha_k|}))$ for
every $k = 1, \dots, 2n$, $u\in \D(L^{2n})$, and multi-index
$\alpha_k=(j_1,\dots, j_k)$ for which each component is larger than
$1$ and $|\alpha_k|\leq 2n $.
\end{enumerate}
Here, $\D(L^{2n- \alpha_k})$ refers to the direct product
$\D(L^{2n-j_1}) \times \cdots \times\D(L^{2n-j_k})$.
\end{assumption}

\begin{remark}
The nonlinearity of the semilinear Klein--Gordon equation introduced
in Section~\ref{Intro} satisfies Assumption~\ref{assumpNonlin}.
\end{remark}

\begin{remark}
To see how the differentiability requirement in
Assumption~\ref{assumpNonlin} \ref{ass.iv} arises, consider the
following example, which is a simplified version of the estimates
which arise in the analysis of the numerical scheme below.  Take
$g(s) = \e^{x J A_c} f(h(s))$, $h(s) = \e^{x J A_c} u$,
$u \in \D(L^n)$ and $n =1$.  Since
\begin{align}
  \e^{- x J A_c} \, g''(s)
  & = - A_c^2 \, f( h(s))
      + 2 J A_c \, D f(h(s)) \, h'(s) \notag \\
  & \quad
      + D^2 f(h(s))[ h'(s), h'(s)]
      + D f(h(s)) \, h''(s) \,, 
\end{align}
$\lVert g'' \rVert_X$ is uniformly bounded in $c$ provided
\begin{subequations}
\begin{gather}
  Df(u) \in\L( \D(L^{2n-1}), D(L^{2n-1})) = \L(\D(L)) \,, \\
  Df(u)  \in \L( \D(L^{2n-2}), D(L^{2n-2}))= \L(X) \,, \\
  D^2f(u)  \in \L( \D(L^{2n-(1,1)}, \D(L^{2n-2} )
  = \L(\D(L)\times \D(L), X) \,.
\end{gather}
\end{subequations}
This suffices to satisfy condition \ref{iv} of Lemma~\ref{2DGauss} for
$G(s,x,T)=g(s)$.
\end{remark}

\begin{assumption}\label{assumplocBound}
For given $n$, in the setting of Assumption~\ref{assumpNonlin}, there
exists $\mathcal{T} \in (0, \mathcal{T}_0]$ and $K>0$ independent of
$c$ such that
\begin{equation}
  \sup_{0\leq t \leq \mathcal{T} } \lVert w(t) \rVert_{\D(L^n)}
  \leq K \,.
\end{equation}
\end{assumption}
To guarantee uniform convergence with respect to $c$, we make the
following important observation which effectively asserts that the
time derivative $\dot w$ is bounded uniformly in $c$.
\begin{lemma}\label{lemBoundDeriv}
The solution $w$ of \eqref{e.DuhamelMainODE3} satisfies
\begin{equation}\label{EstimBoundDeriv }
  \lVert w(t_i + s) - w(t_i) \rVert
  \leq \frac{s}2 \, \lVert w(t_i) \rVert_{\D(L)}
  + s \, \sup_{\sigma \in [0, s]} \,
    \bigl\Vert
      \F \bigl(\e^{c^2(t_i + \xi)\J} \, w(t_i +\sigma) \bigr)
    \bigr\Vert \,.
\end{equation}
\end{lemma}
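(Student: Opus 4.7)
The strategy is to apply the Duhamel formula \eqref{e.DuhamelMainODE3} and bound each of the two resulting terms separately using the estimates collected in Lemma~\ref{LemSpecTheo}.

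Subtracting $w(t_n)$ from \eqref{e.DuhamelMainODE3} yields
\begin{equation*}
w(t_n + s) - w(t_n) = (\e^{s \J A_c} - I) \, w(t_n) - B_c^{-1} \int_0^s \e^{(s-\xi) \J A_c} \, \e^{-c^2(t_n + \xi)\J} \, \F\bigl(\e^{c^2(t_n + \xi)\J} \, w(t_n+\xi), \xi \bigr) \, \d\xi.
\end{equation*}
The triangle inequality reduces the problem to bounding the free part and the Duhamel integral separately. For the free part, observe that $\J A_c$ is block-diagonal, acting as $\mathrm{diag}(J A_c, -J A_c)$, so $\e^{s\J A_c} - I$ decouples on the two components of $w(t_n)$. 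Applying Lemma~\ref{LemSpecTheo}\ref{vi} to each component yields $\|(\e^{s\J A_c} - I) \, w(t_n)\| \leq \tfrac12 \, s \, \|w(t_n)\|_{\D(L)}$, which accounts for the first contribution to the claimed bound.

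For the integral part, I estimate the norm of each operator factor appearing inside the integrand. Since $J$ is skew-adjoint the same is true of $\J$, so $\e^{-c^2(t_n+\xi)\J}$ is unitary with operator norm equal to $1$. Lemma~\ref{LemSpecTheo}\ref{v} applied componentwise gives $\|\e^{(s-\xi)\J A_c}\| \leq 1$. Finally, since $L \geq 0$, the spectral inequality $\sqrt{\lambda+c^2} \geq c$ on $\sigma(L)$ yields $\|B_c^{-1}\| \leq 1$. Pulling the norm inside the Bochner integral and taking the supremum over $\xi \in [0,s]$ then produces the second contribution. There is no serious obstacle; the argument is essentially bookkeeping, the only subtlety being to observe that the block-diagonal structure of $\J$ and $\J A_c$ transfers the scalar bounds of Lemma~\ref{LemSpecTheo} to the product space and that $B_c^{-1}$ is contractive uniformly in $c$.
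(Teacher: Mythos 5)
Your proposal is correct and follows essentially the same route as the paper's (very brief) proof: apply the Duhamel formula, use Lemma~\ref{LemSpecTheo}\ref{vi} for the term $(\e^{s\J A_c}-I)\,w(t_n)$, and bound the integral using $\lVert \e^{(s-\xi)\J A_c}\rVert \leq 1$, the unitarity of $\e^{-c^2(t_n+\xi)\J}$, and $\lVert B_c^{-1}\rVert \leq 1$. Your write-up simply makes explicit the componentwise transfer of the scalar estimates to the block-diagonal operators, which the paper leaves implicit.
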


\begin{proof}
The proof is a direct application of estimate \ref{vi} in
Lemma~\ref{LemSpecTheo} and the fact that
$\lVert B_c^{-1} \rVert \leq 1$.
\end{proof}

In a first step, we define a sequence of ``pre-schemes''
$\Phi_l \colon X \times \R \to X$ which provide consistent
approximations to the right hand side of the Duhamel formula
\eqref{e.DuhamelMainODE3} to order $\tau^{l+1}$, namely
\begin{subequations}
  \label{SchemeSeq}
\begin{gather}
  \Phi_1(w, z)
  = \e^{z \J A_c} \, w - B_c^{-1} \int_0^z \, \e^{-c^2 s\J} \,
        \F (\e^{c^2 s\J} \, w, s) \, \d s \,,
  \label{SchemeSeq.a} \\
  \Phi_{l+1}(w, z)
  = \e^{z \J A_c} \, w - B_c^{-1} \int_0^z \e^{(z -s) \J A_c} \,
        \e^{-c^2 s\J} \,
        \F \bigl( \e^{c^2 s\J} \, \Phi_l(w, s), s \bigr) \, \d s \,.
  \label{SchemeSeq.b}
\end{gather}
\end{subequations}
The pre-schemes approximate the true solution in the following sense.

\begin{lemma}\label{LemRl}
Under Assumption~\ref{assumpNonlin} \ref{ass.iii}, let $w$ be a
solution for \eqref{e.DuhamelMainODE3} satisfying
Assumption~\ref{assumplocBound} for $n=1$, and fix $l \in \N^*$.  Then
there exist constants $C_l$ independent of $c$ such that all
$s \geq 0$,
\begin{equation}
  \lVert w(t_i + s) - \Phi_l(w(t_i), s) \rVert \leq C_l \, s^{l+1} \,.
\end{equation}
\end{lemma}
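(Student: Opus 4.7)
The plan is to proceed by induction on $l$, using the Lipschitz continuity of $\F$ (inherited from $f$) together with the $c$-uniform bounds from Lemma~\ref{LemSpecTheo} and Assumption~\ref{assumplocBound}.

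For the base case $l=1$, I would subtract the pre-scheme \eqref{SchemeSeq.a} from the Duhamel formula \eqref{e.DuhamelMainODE3}. Adding and subtracting $\e^{-c^2\sigma\J}\F(\e^{c^2\sigma\J}w(t_n+\sigma),\sigma)$ inside the integrand splits the error into a \emph{frozen-argument} piece
\begin{equation*}
-B_c^{-1}\int_0^s \e^{-c^2\sigma\J}\bigl[\F\bigl(\e^{c^2\sigma\J}w(t_n+\sigma),\sigma\bigr)-\F\bigl(\e^{c^2\sigma\J}w(t_n),\sigma\bigr)\bigr]\,\d\sigma
\end{equation*}
and a \emph{propagator} piece
\begin{equation*}
-B_c^{-1}\int_0^s \bigl(\e^{(s-\sigma)\J A_c}-I\bigr)\,\e^{-c^2\sigma\J}\F\bigl(\e^{c^2\sigma\J}w(t_n+\sigma),\sigma\bigr)\,\d\sigma.
\end{equation*}
The frozen-argument piece is $O(s^2)$: the Lipschitz property of $f$ reduces it to a constant multiple of $\int_0^s \|w(t_n+\sigma)-w(t_n)\|\,\d\sigma$, and Lemma~\ref{lemBoundDeriv} together with Assumption~\ref{assumplocBound} yields $\|w(t_n+\sigma)-w(t_n)\|\leq C\sigma$ with $C$ independent of $c$. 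The propagator piece is $O(s^2)$ by Lemma~\ref{LemSpecTheo}\ref{vi}, which provides $\|(\e^{(s-\sigma)\J A_c}-I)u\|\leq \tfrac12(s-\sigma)\|u\|_{\D(L)}$; here $\e^{\pm c^2\sigma\J}$ preserves the graph norm (Lemma~\ref{LemSpecTheo}\ref{iii}), $\|B_c^{-1}\|\leq 1$, and $\F$ remains bounded in $\D(L)$ along the trajectory thanks to Assumption~\ref{assumplocBound}.

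For the inductive step, assuming the claim holds at level $l$, I would subtract \eqref{SchemeSeq.b} from the Duhamel formula to obtain
\begin{equation*}
w(t_n+s)-\Phi_{l+1}(w(t_n),s) = -B_c^{-1}\int_0^s \e^{(s-\sigma)\J A_c}\e^{-c^2\sigma\J}\bigl[\F\bigl(\e^{c^2\sigma\J}w(t_n+\sigma),\sigma\bigr)-\F\bigl(\e^{c^2\sigma\J}\Phi_l(w(t_n),\sigma),\sigma\bigr)\bigr]\,\d\sigma.
\end{equation*}
Applying $\|B_c^{-1}\|\leq 1$, the contraction estimate $\|\e^{(s-\sigma)\J A_c}\|\leq 1$ from Lemma~\ref{LemSpecTheo}\ref{v}, the Lipschitz constant of $\F$, and the induction hypothesis $\|w(t_n+\sigma)-\Phi_l(w(t_n),\sigma)\|\leq C_l\,\sigma^{l+1}$, the integrand is bounded by a constant multiple of $\sigma^{l+1}$; integration over $[0,s]$ closes the induction with $C_{l+1}=\mathrm{Lip}(\F)\,C_l/(l+2)$.

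The main obstacle is concentrated in the propagator piece of the base case: the trivial bound $\|\e^{(s-\sigma)\J A_c}-I\|\leq 2$ supplies no convergence order in $s$, so one is forced to invoke Lemma~\ref{LemSpecTheo}\ref{vi} to extract the linear factor $(s-\sigma)$, which in turn requires the integrand to be controlled in $\D(L)$ rather than in $X$. Once this is accepted, the remaining calculations are routine, and all constants are $c$-independent because the underlying estimates in Lemma~\ref{LemSpecTheo} and Assumption~\ref{assumplocBound} are themselves uniform in $c$.
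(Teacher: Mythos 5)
Your proof follows the paper's argument essentially verbatim: the same induction on $l$, with the base case $R_1$ split into a frozen-argument term (controlled via Lemma~\ref{lemBoundDeriv}, the Lipschitz bound, and Assumption~\ref{assumplocBound}) and a propagator term (controlled via Lemma~\ref{LemSpecTheo}\ref{vi}), and the same inductive step using $\|B_c^{-1}\|\le 1$, $\|\e^{(s-\sigma)\J A_c}\|\le 1$, and the Lipschitz constant of $\F$. If anything, you are more explicit than the paper's one-line justification about the key technical point, namely that invoking Lemma~\ref{LemSpecTheo}\ref{vi} requires the integrand (hence $\F$ along the trajectory) to be bounded in $\D(L)$ uniformly in $c$.
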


\begin{proof}
We set $R_l(w(t_i), s) = w(t_i + s) - \Phi_l(w(t_i), s)$ and proceed
by induction.  When $l=1$,
\begin{align}
  R_1(w(t_i), s)
  & = B_c^{-1} \int_0^s \, \e^{-c^2 \sig\J} \,
        \F \bigl( \e^{c^2 \sig\J} \, w(t_i), \sigma \bigr) \, \d \sigma
      \notag\\
  & \quad
    - B_c^{-1} \int_0^s \e^{(s -\sig) \J A_c} \, \e^{-c^2 \sig\J} \,
        \F \bigl( \e^{c^2 \sig\J} \, w(t_i +\sig), \sigma \bigr) \,
        \d \sigma \,.
  \label{R1}
\end{align}
The estimate on $R_1$ follows by using Lemma~\ref{lemBoundDeriv} to
freeze $w(t_i + \sigma)$ and Lemma~\ref{LemSpecTheo}(iv) to remove the
operator $\e^{(s-\sig ) \J A_c}$ in the second integral in \eqref{R1}.
For $l\geq1$,
\begin{align}
  R_{l+1}(w(t_i), s)
  & = B_c^{-1} \int_0^s \e^{(s -\sig) \J A_c} \, \e^{-c^2 \sig\J} \,
        \F \bigl( \e^{c^2 \sig\J} \,
          \Phi_l( w(t_i ), \sigma), \sigma \bigr) \, \d \sig
      \notag\\
  & \quad
    - B_c^{-1} \int_0^s \e^{(s -\sig) \J A_c} \,
        \e^{-c^2 \sig\J} \,
        \F \bigl( \e^{c^2 \sig\J} \, w(t_i +\sig), \sigma \bigr) \,
        \d \sigma \,.
\end{align} 
By Lemma~\ref{LemSpecTheo} and the fact that $f$ is Lipschitz on
bounded sets of $X$ with respect to the first argument, there exists a
constant $C$ independent of $c$ such that
\begin{equation}
  \lVert R_{l+1}(w(t_i), s)  \rVert
  \leq C \, s \, \sup_{\sig \leq s} \,
       \lVert R_l(w(t_i), \sig) \rVert \,.
\end{equation}
This completes the proof.
\end{proof}

While the operator $A_c$ and the associated semi-group $\e^{t\J A_c}$
are uniformly well-behaved as $c \to \infty$, the integrals in
\eqref{SchemeSeq} still contain highly oscillatory terms with a
\emph{single} fast frequency.  For the latter, effective numerical
quadrature is possible \cite{MohamadO:2021:NumericalIF}.  Following
the strategy developed there, we split $z/T \equiv N_z + \theta_z$
into its integer part $N_z = \lfloor z/T \rfloor$ and fractional part
$\theta_z = z/T - N_z$.  Then the integral in \eqref{SchemeSeq.a} can
be written
\begin{align}
  B_c^{-1} \int_0^z \, & \e^{-c^2 s\J} \,
    \F (\e^{c^2 s\J} \, w, s) \, \d s
    \notag \\
  & = B_c^{-1} \sum_{j=0}^{N_z-1} \int_{jT}^{(j+1)T}
        \e^{-c^2 s\J} \,
        \F (\e^{c^2 s\J} \, w, s) \, \d s
    \notag \\
  & \quad
      + B_c^{-1} \int_{N_zT}^{z} 
        \e^{-c^2 s\J} \,
        \F (\e^{c^2 s\J} \, w, s) \, \d s 
    \notag \\
  & = T \sum_{j=0}^{N_z-1} \int_0^1 G_0(jT,\sigma) \, \d \sigma
      + T \int_0^{\theta_z} G_0(N_zT,\sigma) \, \d \sigma
  \label{e.integral-rewrite}
\end{align}
with
\begin{equation}
  G_0(\rho,\sigma)
  = B_c^{-1} \, \e^{-2 \pi \sigma \J} \, \F (\e^{2 \pi \sigma \J} \, w,
    \rho+\sigma T)
  \label{e.G0}
\end{equation}
and where, in the second equality of \eqref{e.integral-rewrite}, we
have used \eqref{e.expidentity}.  Analogously, the integral in
\eqref{SchemeSeq.b} can be written
\begin{align}
  B_c^{-1} \int_0^z & \e^{-s \J A_c} \,
        \e^{-c^2 s\J} \,
        \F \bigl( \e^{c^2 s\J} \, \Phi_l(w, s), s \bigr) \, \d s
  \notag \\
  & = T \sum_{j=0}^{N_z-1} \int_0^1 G[\Phi_l](jT,\sigma) \, \d \sigma
      + T \int_0^{\theta_z} G[\Phi_l](N_zT,\sigma) \, \d \sigma \,,
\end{align}
where, for $\Upsilon \colon X\times \R \to X$,
\begin{equation}
  G[\Upsilon](\rho,\sigma)
  = B_c^{-1} \, \e^{-(\rho+\sigma T)\J A_c} \, \e^{-2 \pi \sigma \J} \,
    \F \bigl(\e^{2 \pi \sigma \J} \,
      \Upsilon(w, \rho+\sigma T), \rho+\sigma T \bigr) \,.
  \label{E.Gups}
\end{equation}
Altogether, \eqref{SchemeSeq} then takes the form
\begin{subequations}
  \label{SchemeSeq2}
\begin{gather}
  \Phi_1(w, z)
  = \e^{z \J A_c} \, w
    - T \sum_{j=0}^{N_z-1} \int_0^1 G_0(jT,\sigma) \, \d \sigma
    - T \int_0^{\theta_z} G_0(N_zT,\sigma) \, \d \sigma \,, 
  \label{SchemeSeq2.a} \\
  \Phi_{l+1}(w, z)
  = \e^{z \J A_c} \,
    \biggl( w
      - T \sum_{j=0}^{N_z-1} \int_0^1 G[\Phi_l](jT,\sigma) \, \d \sigma
      - T \int_0^{\theta_z} G[\Phi_l](N_zT,\sigma) \, \d \sigma
    \biggr) \,.      
  \label{SchemeSeq2.b}
\end{gather}
\end{subequations}
We now use the approximate the integrals in \eqref{SchemeSeq2} by
classical Gauss quadrature and the sums by Gauss summation to obtain
\begin{subequations}
  \label{SchemeSeq3}
\begin{align}
  \Psi_1(w, z)
  & = \e^{z \J A_c} \, w
    -\frac{N_z T}{4}\sum_{j=1}^n \sum_{k=0}^m w_{j,N_z} \,
      \omega_k \, G_0(r_{j,N_z}, \eta_k)\notag \\
  & \quad 
    - \frac{\theta_z T}{2} 
      \sum_{k=0}^m \omega_k \, G_0(N_zT,\theta_z\eta_k) \,,
  \label{SchemeSeq3.a} \\
  \Psi_{l+1}(w, z)
  & = \e^{z \J A_c} \,
    \biggl( w 
       - \frac{N_z T}{4} \sum_{j=1}^n \sum_{k=0}^m w_{j,N_z} \,
      \omega_k \, G[\Psi_l](r_{j,N_z}, \eta_k) \notag \\
  & \quad 
      - \frac{\theta_z T}{2}
         \sum_{k=0}^m \omega_k \, G[\Psi_l](N_zT,\theta_z\eta_k) \, 
    \biggr) \,. 
  \label{SchemeSeq3.b}
\end{align}
\end{subequations}

\begin{remark}
This last approximation is precisely the setting in which
Lemma~\ref{2DGauss} applies.  The function $F$ in the lemma
corresponds to the right hand sides of \eqref{e.G0} resp.\
\eqref{E.Gups}.  Then the function $G$ defined in the lemma coincides
with $G$ defined here, except that we have dropped the third argument
$T$ throughout this section as it is considered constant here in order
not to clutter up notation.
\end{remark}

\begin{remark} \label{r.tau}
For a scheme of global order $l$, we use $\Psi_l$ with $z=\tau$ as the
time stepper.  At the top level, the second sum in
\eqref{SchemeSeq3.a} or \eqref{SchemeSeq3.b} does not contribute.
However, when $l\geq 2$, the inner evaluations of
$\Psi_{l-1}, \Psi_{l-2}, \dots$ will generally be evaluated at points
$z$ that are not integer multiples of $T$, so that their $z$-arguments
have to be re-split into the respective integer ($N_z$) and fractional
($\theta_z$) multiples of $T$.  Thus, in general, the second sum on
the right of \eqref{SchemeSeq3} is required for consistency.
\end{remark}

\begin{remark} 
Note that in the case where $\F$ is constant with respect to the
second variable, the function $G_0 = G_0(x)$ is one-variable periodic
function. Thus, the approximation from Theorem~\ref{ThmTrapezBanach}
can also be used to define a first order scheme ($l=1$) with accuracy
that is exponential in the number of nodes.  More specifically, for
$\tau = NT$,
\begin{align}
  \Phi_1(w, \tau)
  & = \e^{\tau \J A_c} \, w  - \tau  \int_0^1 G_0(x) \, \d x
      \notag \\
  & = \e^{\tau \J A_c} \, w
      - \frac\tau m \sum_{k=0}^{m-1} G_0 \Bigl(\frac km \Bigr)
      + \O (\tau \,\e^{-d m}) 
\end{align}
for some $d>0$. 
\end{remark}

\begin{lemma}\label{LemSl}
Let $l, n \in \N^*$ and $w\in \D(L^{2n})$.  Fix $0<z_0<1$, $c_0>0$,
and assume that $f$ satisfies Assumption~\ref{assumpNonlin}, with
analyticity property \ref{ass.ii} valid on the ellipse
$\Sigma_{1/(2\gamma)}(0, 1)$ for some $\gamma \in (0,1)$.  Then there
exists $C_l=C_l(f, \|w\|_{\D(L^{2n})}, c_0, z_0, n)$ such that for all
$m \in \N^*$ and $z\leq z_0 <1$,
\begin{equation}
  \label{e.LemSl}
  \lVert \Psi_l(w, z) - \Phi_l(w, z) \rVert 
  \leq C_l \, z \, (z^{2n} + \gamma^{2m}) \,.
\end{equation}
\end{lemma}

\begin{proof} 
In view of the expression for each $G_0$
and $G$, there exist $F_0$ and $F$ such that 
\begin{equation}
  G_0(\rho, \gamma, T) = F_0(\rho, T\sigma, \sigma) \,, \qquad
  G(\rho, \gamma, T) = F(\rho, T\sigma, \sigma) \,,
\end{equation}
where, since $w\in \D(L^{2n})$ and
$f$ satisfies Assumption~\ref{assumpNonlin}, $F_0$ and
$F$ satisfy the conditions of Lemma~\ref{2DGauss} on $[0, z_0]\times
[0, 2\pi/c_0^2]\times\Sigma_{1/(2\gamma)}(0,
1)$.

We set $S_l(w, z) =\Psi_l(w, z) - \Phi_l(w, z)$ and proceed by
induction.  For $l=1$, we can directly use Lemma~\ref{2DGauss} for the
difference of first terms and Theorem~\ref{ThmGaussBanach} for the
difference of second terms, \eqref{e.LemSl} holds true as stated.  For
$l>1$, we have
\begin{align}
& \e^{-z \J A_c} \, S_{l+1}(w, z)
  \notag \\
  & = 
       - \frac{N_z T}{4} \sum_{j=1}^n \sum_{k=0}^m w_{j,N_z} \,
      \omega_k \, G[\Psi_l](r_{j,N_z}, \eta_k) \, 
      - \frac{\theta_z T}{2} \sum_{k=0}^m \omega_k \, G[\Psi_l](N_zT,\theta_z\eta_k) \, 
    \notag \\
    &  \quad
    + \frac{N_z T}{4}\sum_{j=1}^n \sum_{k=0}^m w_{j,N_z} \,
      \omega_k \, G[\Phi_l](r_{j,N_z}, \eta_k) \, 
      +\frac{\theta_z T}{2} \sum_{k=0}^m \omega_k \, G[\Phi_l](N_zT,\theta_z\eta_k) \, 
    \notag \\
    & \quad
    - \frac{N_z T}{4} \sum_{j=1}^n \sum_{k=0}^m w_{j,N_z} \,
      \omega_k \, G[\Phi_l](r_{j,N_z}, \eta_k) \, 
      - \frac{\theta_z T}{2} \sum_{k=0}^m \omega_k \, G[\Phi_l](N_zT,\theta_z\eta_k) \, 
    \notag \\
   & \quad + T \sum_{j=0}^{N_z-1} \int_0^1 G[\Phi_l](jT,\sigma) \, \d \sigma
      + T \int_0^{\theta_z} G[\Phi_l](N_zT,\sigma) \, \d \sigma
  \label{SlRemainder}
\end{align}
We write $S_{l+1}^{(1)}(w, z)$ and $S_{l+1}^{(2)}(w, z)$ to denote the
first two and the last two lines on the right of \eqref{SlRemainder},
respectively.  As $f$ is Lipschitz with respect to the first argument
on bounded sets of $X$, there exist $K_1$ and $K_2$, each depending on
$f$, such that
\begin{align}
  \|S_{l+1}^{(1)}(w, z)\|
  & \leq K_1 \,  z \, \sup_{j,k} \|S_l(w, r_{j,N_z} +\eta_k T)\| 
    + K_2 \,  z  \, \sup_k \|S_l(w, N_zT + \theta_z\eta_k T)\|
    \notag \\
  & \leq (K_1 + K_2) \, C_l \,  z^2 \, (z^{2n} + \gamma^{2m}) \,.
\label{SlRemainder1}
\end{align}
On the other hand, using Lemma~\ref{2DGauss}, there exists a constant
$K_3$ depending on $f$, $\|w\|_{\D(L^{2n})}$, $c_0$, $z_0$, 
and $n$ such that
\begin{equation}\label{SlRemainder2}
\|S_{l+1}^{(2)}(w, z)\| \leq K_3 \, z \, (z^{2n} + \gamma^{2m}) \,. 
\end{equation}
Thus, combining \eqref{SlRemainder1} and \eqref{SlRemainder2}, we
conclude that there exists a constant $C_{l+1}$ depending on $f$,
$\|w\|_{\D(L^{2n})}$, $c_0$, $z_0$  and $n$ such that
\begin{equation}
  \|S_{l+1}(w, z)\| \leq C_{l+1} \, z \, (z^{2n} + \gamma^{2m}) \,,
\end{equation}
which concludes the proof.
\end{proof}

As stated before, we select the time step $\tau$ to be an integer
multiple of the fast period $T$ so that $\tau = N T$ for some
$N \in \N$.  As a numerical approximation to the exact solution $w$ at
time $t_{i+1}$, we take the scheme
 \begin{subequations}
  \label{lthOrdScheme}
\begin{gather}
 w_{i+1}  =\Psi_l (w_i, \tau)   \,, \\
   w_0 =  \left(\begin{matrix} \phi_0 \\ \phi_0 \end{matrix}\right) -c^{-2}\J B^{-1}_c \left(\begin{matrix} \phi'_0 \\ \phi'_0 \end{matrix}\right).
\end{gather}  
\end{subequations}

\section{Convergence analysis}\label{Sec.ConvAnl}

The scheme \eqref{lthOrdScheme} satisfies the following global
estimate.

\begin{theorem}
Let $f$ satisfies Assumption \ref{assumpNonlin}, with analyticity
property (i) valid on the ellipse $\Sigma_{1/(2\gamma)}(0, 1)$ for
some $\gamma \in (0,1)$.  Fix $l\in \N^*$, $c_0 >0$, and let
$n = \lfloor \frac{l+ 1}{2} \rfloor$. Assume further that there exists
$\mathcal K>0$ such that for every $c \geq c_0$,
\begin{equation}\label{InitialEstim}
  \|\phi_0\|_{\D(L^{2n})} + c^{-2} \, \| B^{-1}_c \phi'_0\|_{\D(L^{2n})}
  \leq \mathcal K \,.
\end{equation} 
Then there exist $\mathcal{T} >0$ and
$C = C(f, \mathcal K, \mathcal{T}, c_0,  n)$ such that for all
$c \geq c_0$, $\tau \in \frac{ 2 \pi}{c^2} \N$,
$t_i \leq \mathcal{T}$, and $m \in \N^\ast$,
\begin{equation}
  \|\phi_i -\phi(t_i)\|  \leq C \, (\tau^l + \gamma^{2m})
  \label{e.global}
\end{equation}
where $\phi$ solves \eqref{e.MainEq} and
\begin{equation}
  \phi_i = \frac{(w_i)_1+ (w_i)_2}2
\end{equation}
with $w_i$ given by \eqref{lthOrdScheme}.
\end{theorem}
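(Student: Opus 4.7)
I would run a standard Lady Windermere-fan argument in the $X$-norm comparing the numerical trajectory $w_n$ with the exact trajectory $w(t_n)$, and transfer the bound to $\phi_n - \phi(t_n)$ at the very end. Because the time step is chosen so that $c^2 t_n$ is an integer multiple of $2\pi$, identity \eqref{e.expidentity} gives $\e^{c^2 t_n \J} = I$, hence $\phi(t_n) = \tfrac12 (u(t_n) + v(t_n))$ and consequently $\|\phi_n - \phi(t_n)\| \leq \|w_n - w(t_n)\|$. The initial bound \eqref{InitialEstim} together with a well-posedness/propagation-of-regularity statement for \eqref{e.DuhamelMainODE3} (the analogue of Assumption~\ref{assumplocBound} at level $2M$, implicit in the hypothesis) supplies a constant $K$ independent of $c$ with $\sup_{t \leq \mathcal{T}} \|w(t)\|_{\D(L^{2M})} \leq K$.

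\textbf{Local error.} Set $\delta_n := \Psi_l(w(t_n), \tau) - w(t_n+\tau)$ and split through the pre-scheme $\Phi_l$:
\[
\delta_n = \bigl[\Psi_l(w(t_n),\tau) - \Phi_l(w(t_n),\tau)\bigr] + \bigl[\Phi_l(w(t_n),\tau) - w(t_n+\tau)\bigr].
\]
Lemma~\ref{LemSl}, applied with the uniform bound on $\|w(t_n)\|_{\D(L^{2M})}$, controls the first bracket by $C\tau(\tau^{2M} + \gamma^{2N})$; Lemma~\ref{LemRl} controls the second bracket by $C_l\tau^{l+1}$. The choice $M = \lfloor (l+1)/2 \rfloor$ ensures $2M \geq l$, so that $\|\delta_n\| \leq C(\tau^{l+1} + \tau \gamma^{2N})$ with constants independent of $c$.

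\textbf{Stability.} Next I would prove, by induction on $l$, a Lipschitz estimate
\[
\|\Psi_l(w,\tau) - \Psi_l(w',\tau)\| \leq (1 + K_l \tau) \, \|w - w'\|
\]
for $w, w'$ in a fixed bounded ball of $X$, with $K_l$ independent of $c$ and $\tau$. The base case uses that $\e^{\tau \J A_c}$ is a contraction by Lemma~\ref{LemSpecTheo}\ref{v}, that $\|B_c^{-1}\| \leq 1$ from the spectral theorem, that the phase factors $\e^{\pm c^2 s \J}$ are isometries since $\J$ is skew-adjoint, and that $\F$ is Lipschitz on bounded subsets of $X$ as a consequence of Assumption~\ref{assumpNonlin}. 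The inductive step composes the Lipschitz constant of $\F$ with that of $\Psi_{l-1}$ through the quadrature rule; the weights $\omega_{i,m}$ and $\omega_k$ sum to an $O(1)$ constant independent of $m$, $N$, and $c$, so each nested level contributes a single factor of $\tau$.

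\textbf{Global assembly and main obstacle.} Writing $e_n := w_n - w(t_n)$, the decomposition $e_{n+1} = [\Psi_l(w_n,\tau) - \Psi_l(w(t_n),\tau)] + \delta_n$ combined with the stability estimate yields $\|e_{n+1}\| \leq (1+K_l \tau)\|e_n\| + C(\tau^{l+1} + \tau \gamma^{2N})$. Since $e_0=0$, discrete Gronwall together with $n\tau \leq \mathcal{T}$ gives $\|e_n\| \leq C'(\tau^l + \gamma^{2N})$, and the first paragraph converts this into \eqref{e.global}. A standard bootstrap keeps $w_n$ in the stability ball for $\tau$ sufficiently small and $N$ sufficiently large. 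The hardest part of the argument is the stability step: although $T = 2\pi/c^2$ forces the quadrature nodes to cluster on a vanishing scale and the nonlinearity $\F$ is composed with the rapidly oscillating phase $\e^{c^2 s \J}$, the Lipschitz constant must remain bounded uniformly in $c$. This uniformity depends crucially on the skew-adjointness of $\J$, on the $c$-independence of the quadrature weight sums, and on propagating both carefully through the nested recursion of $\Psi_l$ in $\Psi_{l-1}$.
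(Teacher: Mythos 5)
Your proposal is correct and follows essentially the same route as the paper: local error split through the pre-scheme $\Phi_l$ using Lemma~\ref{LemRl} and Lemma~\ref{LemSl}, a $c$-uniform Lipschitz/stability estimate for $\Psi_l$ proved by induction on $l$ (the paper writes it as $(1+C_1\tau)^l$, equivalent to your $(1+K_l\tau)$), discrete Gronwall with $e_0=0$, and undoing the twist via $\e^{c^2 t_n \J}=I$ at the end. Your explicit remarks that $2M\geq l$ makes the $\tau^{2M}$ term harmless and that a bootstrap keeps $w_n$ in the stability ball are points the paper leaves implicit, but they do not change the argument.
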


\begin{proof}
Note  first that for every $c \geq c_0$,
\begin{equation}
  \|w_0\|_{\D(L^{2n})}
  \leq \|\phi_0\|_{\D(L^{2n})}
       + \|c^{-2} \, J B^{-1}_c \phi'_0\|_{\D(L^{2n})}
  \leq \mathcal K \,.
\end{equation}
Thus, there exist two constants $\mathcal{T}, K >0 $ depending on
$c_0$ and $w_0$ for which Assumption~\ref{assumplocBound} is
satisfied.  Lemmas~\ref{LemSl} and~\ref{LemRl} allow us to write
\begin{align}
  w(t_i + \tau)
  & = \Phi_l(w(t_i), \tau) + R_l(w(t_i), \tau)
      \notag \\
  & = \Psi_l(w(t_i), \tau) + R_l(w(t_i), \tau) - S_l(w(t_i), \tau) \,.
\end{align}
Setting $e_i = \|w(t_i) - w_i\|$, we now split the error as follows:
\begin{align}
  e_{i+1}
  & \leq \| R_l(w(t_i), \tau)\| + \|S_l(w(t_i), \tau)\|
         \notag \\
  & \quad + \| \Psi_l(w(t_i), \tau) - \Psi_l(w_i, \tau)\| \,.
  \label{locErrlthOrd}
\end{align} 
Recalling that $\F$ is Lipschitz on $X$ and arguing by induction on
$l$, we find that there exists a constant $C_1 >0$ depending on $f$
such that
\begin{equation}\label{locEstim2}
  \|\Psi_l(w(t_i), \tau) - \Psi_l(w_i, \tau)\|
  \leq (1 +C_1 \tau)^l \, e_i \,.
\end{equation} 
By Lemma~\ref{LemRl} and~\ref{LemSl}, there exists a constant $C_2>0$
depending on $f$, $\mathcal K$, $\mathcal{T}$, $c_0$, and
$n$ such that
\begin{equation}\label{locEstim1}
  \|R_l(w(t_i), \tau)\| + \|S_l(w(t_i), \tau)\|
  \leq C_2 \, \tau \, (\tau^l + \gamma^{2m}) \,.
\end{equation} 
Then, \eqref{locErrlthOrd} reads 
\begin{equation}
  e_{i+1} \leq (1 + C_1\tau)^l \, e_i
               + C_2 \, \tau \, (\tau^l + \gamma^{2m}) \,.
\end{equation}
Thus, we find by induction that
\begin{equation}
  e_i \leq (1+ C_1 \, \tau)^{il} \, e_0
  + C_2 \, \frac{(1+ C_1 \, \tau)^{il} - 1}{C_1} \,
    (\tau^l + \gamma^{2m}) \,.
\end{equation}
Since $e_0 = 0$ and $1+ x \leq \e^x$, we obtain
\begin{equation}
  e_i \leq C_2 \, \frac{\e^{C_1 l \mathcal{T}} - 1}{C_1} \,
           (\tau^l + \gamma^{2m})
  \equiv C \, (\tau^l + \gamma^{2m}) \,.
  \label{e.global1}
\end{equation}
To obtain the final estimate, we undo the variable twist, noting that
\begin{equation}
  \phi(t_i)
  = \frac{(\e^{c^2 t_i J} (w(t_i))_1 + (\e^{-c^2 t_i J}(w(t_i))_2}2
  = \frac{w(t_i)_1 + w(t_i)_2}2 \,.
\end{equation}
Then \eqref{e.global1} directly implies estimate \eqref{e.global}.
\end{proof}

\section{Numerical tests}
\label{s.numerics}

We now demonstrate the scaling behavior of the new uniformly accurate
time integrators (UAT) in a simple test case where an explicit
reference solution is available.  Our example has
$\phi \colon [0, T] \times \T^d \to \CC$ with $L = \delta - \Delta$
and $f(\phi) = |\phi|^2 \phi$ with some $\delta >0$.  Then, for
arbitrary $a \in \R^d$, the function
\begin{equation}
  \phi(t, x) = \sqrt{\delta + |a|^2} \, \e^{\i(c t + a \cdot x)}
\end{equation}
is a solution of \eqref{e.MainEq} with
\begin{equation}
  \phi_0 = \sqrt{\delta + |a|^2} \, \e^{\i a \cdot x} \,, \quad
  \phi'_0 = \i c\sqrt{\delta + |a|^2} \, \e^{\i a \cdot x} \,.
\end{equation}
For simplicity, we consider only solutions with no dependence on $x$,
i.e.\ $a =0$, where
\begin{equation}
  \phi(t, x) = \phi(t) = \sqrt{\delta } \, \e^{\i c t } \,.
\end{equation}
The order of the Gaussian quadrature approximating the
inner integral in \eqref{SchemeSeq2} is chosen as $m=6,8,10$ at level
$l=0,1,2$.  Theoretically, in view of \eqref{e.global},  $m$ should be
chosen so that
\begin{equation}
  m \approx \frac{\ln(\tau)}{2 \ln(\gamma)} \, l \,.
\end{equation}
However, as we do not have any access to a good estimate for $\gamma$,
we determined a minimal choice of $m$ empirically.

\begin{figure}
\centering
\includegraphics[width=0.8\textwidth]{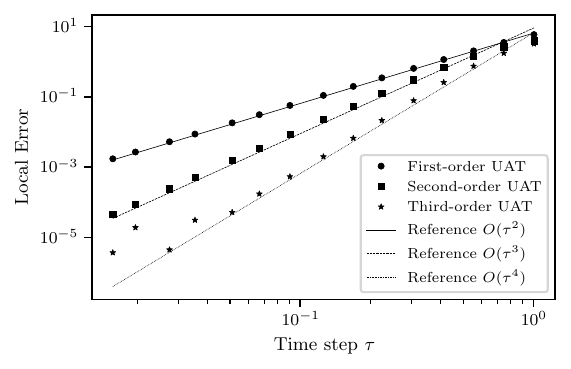}
\caption{Scaling of the local error with the time step $\tau$.}
\label{f.1}
\end{figure}

In Figure~\ref{f.1}, we confirm numerically the theoretical
convergence rate with respect to $\tau$ for the first, second and
third order schemes given by \eqref{lthOrdScheme}.  Shown is the local error
\begin{equation}
 E_{\text{loc}}(\tau) = \|\phi_1 -\phi(\tau)\| \,,
\end{equation}
which corresponds to $i=1$ in \eqref{e.global}, for fixed $c=200$ as
the time step $\tau$ is varied.  As explained in
Section~\ref{Sec.AccuSchm}, we work with time steps that are integer
multiples of the fast period, i.e., $\tau = \frac{2 \pi}{c^2} k$ for
integer $k$.  It is possible to modify the code such that arbitrary
time steps are possible.  However, this would require retaining all
factors $\e^{\pm c^2 t_i}$ in the generating formula
\eqref{e.DuhamelMainODE3} and all expressions that follow, and the
second sum in \eqref{SchemeSeq3} would already appear at the top level
of the recursion, cf.\ Remark~\ref{r.tau}.  As there is no advantage
of doing so, we did not implement this general case.

Figure~\ref{f.1} shows, in particular, that the local error of the
third order method scales like $\tau^{4}$, thus the global error will
scale like $\tau^3$, except for rather small values of $\tau$ where
the limitations of double-precision floating point begin to matter.
In general, floating-point errors might occur when we use the
quadrature formula on very small intervals.  As Figure~\ref{f.1}
shows, this occurs when using the third order scheme for small values
of $\tau$, since calling the function $\Psi_3(w, \tau)$ defined in
\eqref{SchemeSeq3.b} includes implicit calls of $\Psi_2(w, y)$ and
$\Psi_1(w, x)$ for $x,y$ with $x \ll y\ll \tau$, which means that the
lengths of the subintervals on which we use the Gauss formula for the
discrete sums when calling $\Psi_2(w, y)$ and $\Psi_1(w, x)$ are very
small.

\begin{figure}
\centering
\includegraphics[width=0.8\textwidth]{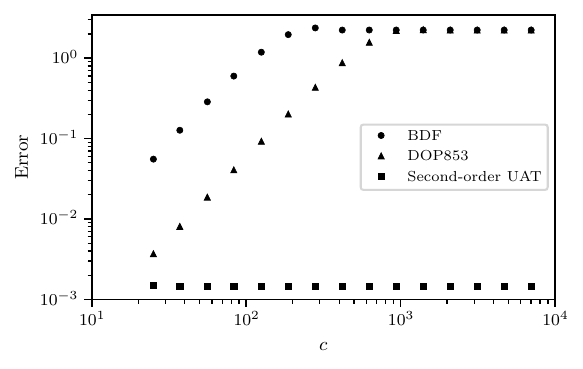}
\caption{Comparison of the second-order uniformly accurate time
integrator with two of the standard solvers from Scipy: the implicit
variable-order backward differentiation scheme and the explicit
Dormand--Prince embedded order 8(5,3) Runge--Kutta scheme.}
\label{f.2}
\end{figure}

\begin{figure}
\centering
\includegraphics[width=0.8\textwidth]{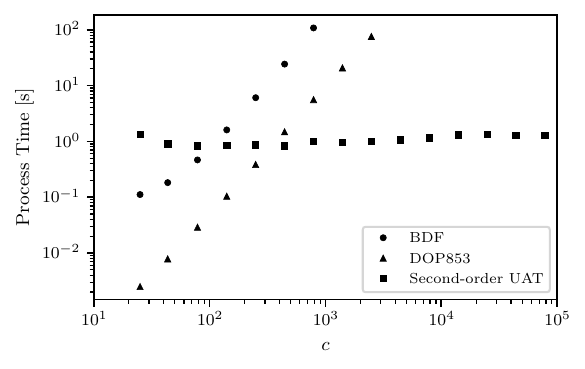}
\caption{Computation time for a single time step of the uniformly
accurate time integrator vs.\ the computation time over the same
interval of time using the built-in solvers, controlling their error
tolerances so that their error is no more than 30\% larger than the
error of the uniform scheme.}
\label{f.3}
\end{figure}

Figure~\ref{f.2} illustrate the uniformity of the error as a function
of $c$ using the second order UAT integrator compared with the
explicit Dormand--Prince embedded order 8(5,3) Runge--Kutta
(``DOP853'') scheme \cite{HNW:1993:SolvODE} and the implicit
multi-step variable-order (1 to 5) method based on a backward
differentiation formula (``BDF'') for the derivative approximation
\cite{Gear:1967:TheNI}.  Error performance of the new scheme is
uniform, while the error increases with $c$ for the built-in schemes,
indicating that their error indicator heuristics are insufficient for
dealing with such extreme multi-scale dynamics.
 
In Figure~\ref{f.3}, we compare the computation time for single time
step using the second order integrator with the solvers used in
Figure~\ref{f.2}.  We see that computation time goes up quadratically
in $c$, as expected, while computation time of the UAT scheme is
constant by design.  Timings refer to our reference implementation in
Python, which is available as supplementary material, on a single core
of an Intel i7 mobile processor, without any attempt at
speed-optimizing the code which is bottlenecked in the Python
interpreter for this low-dimensional test problem.  A more involved
study on approximate slow manifolds for semilinear equations of
Klein--Gordon type is current work-in-progress and will be reported on
separately.

\section*{Acknowledgments}

The work was supported by German Research Foundation (DFG) grants
OL-155/6-2 and MO-4162/1-1.  The authors acknowledge additional
support through German Research Foundation Collaborative Research
Center TRR 181 under project number 274762653.

\bibliographystyle{siam}
\bibliography{kg}

\end{document}